\newcommand{\R}{\mathbb{R}}
\newcommand{\D}{\mathbb{D}}
\newcommand{\n}{\vec{n}}
\newcommand{\bY}{\vec{Y}}
\newcommand{\s}{\mathbb{S}}
\newcommand{\Ar}{\mathring{A}}
\newcommand{\Er}{\mathring{E}}
\newcommand{\bp}{{\Phi}}
\newcommand{\bpsi}{{\Psi}}
\tikzset { domaine/.style 2 args={domain=#1:#2} }
\newtheorem{theo}{Theorem}[section]
\newtheorem*{theo*}{Theorem}
\newtheorem{prop}[theo]{Proposition}
\newtheorem*{prop*}{Proposition}
\newtheorem{lem}{Lemma}[section]
\newtheorem{cor}{Corollary}[section]
\newtheorem*{cor*}{Corollary}
\newtheorem{de}{Definition }[section]
\newtheorem{remark}{Remark}[section]
\newcommand{\nocontentsline}[3]{}
\newcommand{\tocless}[2]{\bgroup\let\addcontentsline=\nocontentsline#1{#2}\egroup}
\title{Energy Estimates for the Tracefree Curvature\\ of Willmore Surfaces and Applications}
\author{Yann Bernard\footnote{School of Mathematics, Monash University, 3800 Clayton, Victoria, Australia.}\:\:,\:Paul Laurain\footnote{Institut Mathématique de Jussieu, Université de Paris, Bâtiment Sophie Germain, Case 7052, 75205 Paris Cédex 13, France \& DMA, Ecole normale supérieure, CNRS, PSL Research University, 75005 Paris}\:\:,\:Nicolas Marque\footnote{University of Potsdam, Institute for Mathematics,
Karl-Liebknecht-Straße 24/25, 14476 Potsdam, Germany}}
\begin{document}
\maketitle
\abstract{We prove an $\varepsilon$-regularity result for the tracefree curvature of a Willmore surface with bounded second fundamental form. For such a surface, we obtain a pointwise control of the tracefree second fundamental form from a small control of its $L^2$-norm. Several applications are investigated. Notably, we derive a gap statement for surfaces of the aforementioned type. We further apply our results to deduce regularity results for conformal minimal spacelike immersions into the de Sitter space $\mathbb{S}^{4,1}$.}
\tableofcontents
\section{Introduction}

\subsection{The Willmore Energy}

We consider an immersion $\bp$ from a closed Riemann surface $\Sigma$ into $\R^3$.  We denote by $g:= \bp^*\xi$ the induced metric on $\Sigma$, with $\xi$ the standard Euclidean metric on $\R^3$, and $d \mathrm{vol}_g$ the volume form associated with $g$. We denote by $\n$ the Gauss map of $\bp$, that is, the normal to the surface. In local coordinates $(x,y)$, we have $$\n := \frac{\bp_x \times \bp_y}{\left|\bp_x \times \bp_y \right|},$$
with $\bp_x = \partial_x \bp$, $\bp_y = \partial_y \bp$, and $\times$ is the usual cross product in $\R^3$.
The second fundamental form of $\bp$ is then defined as:
$$\vec{A} (X,Y) := A (X,Y) \n := \langle d^2 \bp \left( X, Y \right), \n \rangle \n.$$
The two key objects of this paper are the mean curvature $H$ and the trace-free second fundamental form $\Ar$ defined as follows:
$$ \vec{H}(p)= H(p) \n= \frac{1}{2} \text{Tr}_g \left( A \right) \n,$$
$$\vec{\Ar} (X,Y) = \Ar(X,Y)\vec{n} = \big(A(X,Y)  - H(p) g(X,Y)\big)\vec{n}.$$
From these, the Willmore energy is defined as $$W( \bp) := \int_\Sigma H^2 d \mathrm{vol}_g. $$
The Willmore energy was introduced in the early XIX$^\text{th}$ century to study elastic plates. It was identified as a conformal invariant by W. Blaschke (see \cite{MR0076373}) and further studied by T. Willmore (\cite{bibwill}). It must be pointed out that the conformal invariance of $W$ is \emph{contextual}: an inversion centered on a round sphere sends it to a plane with a loss of Willmore energy of $4 \pi$. The true \emph{pointwise} conformal invariant (see T. Willmore's \cite{bibwill}) is rather $| \Ar |_g^2 d \mathrm{vol}_{g}$. The tracefree total curvature is then  a conformal invariant, defined as:
$$\Er(\Phi) := \int_\Sigma | \Ar |_{g}^{2} d \mathrm{vol}_g.$$
Straightforward computations (see, for instance, appendix A in \cite{bibnm3}) show that,
with $\chi(\Sigma)$ denoting the Euler characteristic of $\Sigma$, one has:
\begin{equation} \label{courbsanstrace} \Er(\Phi) = 2W(\Phi) - 4\pi \chi(\Sigma).\end{equation}
The contextual conformal invariance of $W$ is thus to be understood as follows: $W$ is invariant under the action of conformal transformations that do not change the topology of the surface.\\

In the present article we will study Willmore immersions, that is critical points of $W$ (or equivalently $\Er$, given in \eqref{courbsanstrace}).  Willmore immersions form a conformally invariant family satisfying the Willmore equation:
\begin{equation}\label{willmoche}
\Delta_g H + |\Ar|_g^2H =0.
\end{equation}
Given the lackluster analytic properties of this equation (which is supercritical in the weak framework we will make explicit below), two pivotal results are the small energy estimates (termed $\varepsilon$\emph{-regularities}). The first of those is an \emph{extrinsic} result by E. Kuwert and R. Schätzle (see theorem 2.10 in  \cite{bibkuwschat}), followed by an \emph{intrinsic} version by T. Rivière (theorem I.5 in \cite{bibanalysisaspects}). We refer the readers to the discussion in the introduction of Y. Bernard, G. Wheeler, and V. Wheeler's \cite{bernardwheeler} for an explanation of why these two results do not overlap and fundamentally differ in philosophy, while a concrete counter-example can be found in \cite{bibnm3}, with further explanations in remark 3.3.3 of \cite{nmthese}.\\

The goal of the present paper is to prove an $\varepsilon$-regularity result  for the  trace-free second fundamental form, using T. Rivière's formalism of weak Willmore immersions. Such a search is motivated first by a similar \emph{extrinsic} result obtained with E. Kuwert and R. Schätzle's approach (see theorem 2.9 in \cite{bibkuwschat}), but also by a recent intrinsic $\varepsilon$-regularity result for the mean curvature  (theorem 1.4 in \cite{bibnmheps}). An $\Ar$ $\varepsilon$-regularity result would thus complete the extent of possibilities offered by the weak Willmore immersions formalism. Further, the application of the $H$ $\varepsilon$-regularity to minimal bubbling (by eliminating several minimal  bubbling configurations, see \cite{bibnm3}) unlocks the possibility of applying the $\Ar$ version to control the appearance of round spheres as Willmore bubbles (more details on Willmore bubbling can be found in \cite{bibenergyquant} and \cite{MR3843372}).

\subsection{Conformal Weak Willmore Immersions}

In this subsection we establish the notation we will adopt and the notions we will use throughout this paper. \\

Let $\Sigma$ be an arbitrary closed compact two-dimensional manifold and $g_0$ be a smooth ``reference" metric on $\Sigma$. The Sobolev space $W^{k,p} \left( \Sigma, \R^3 \right)$ of measurable maps from $\Sigma$ into $\R^3$ is defined as 
$$W^{k,p} \left( \Sigma , \R^3 \right) := \left\{ f \text{ measurable : } \Sigma \rightarrow \R^3 \text{ s.t } \sum_{l= 0}^{k} \int_\Sigma \left| \nabla_{g_0}^l f \right|^p_{g_0} d\mathrm{vol}_{g_0} < \infty \right\}.$$
Since $\Sigma$ is assumed to be compact, this definition does not depend on $g_0$.


\begin{de}
\label{weakimmersions}
Let $\bp:\Sigma \rightarrow \R^3$. Let $g_{\bp} = \bp^*\xi$ be the first fundamental form of $\bp$ and $\n$ its Gauss map.  Then $\bp$ is called a weak immersion with locally $L^2$-bounded second fundamental form if $\Phi \in W^{1,\infty} \left( \Sigma \right)$, if there exists a constant $C_\bp$ such that 
$$ \frac{1}{C_\bp} g_0  \le g_\bp  \le C_\bp g_0,$$
and if $$\int_\Sigma \left| d \n \right|^2_{g_\bp} d \mathrm{vol}_\bp < \infty.$$ The set of weak immersions with $L^2$-bounded second fundamental form on $\Sigma$ will be denoted $\mathcal{E}(\Sigma)$.
\end{de}

Weak immersions are regular enough for us to work with conformal charts, as seen in theorem 5.1.1 of \cite{bibharmmaps}.
\begin{theo}
\label{localconformalcoordinates}
Let $\bp \in \mathcal{E}(\Sigma)$ and $\D$ the unit disk in $\R^2$. Then for every $x \in \Sigma$, there exists an open disk $ D$ in $\Sigma$ containing $x$ and a homeomorphism $\Psi \, : \, \D\rightarrow D$ such that  $\bp \circ \Psi$ is a conformal bilipschitz immersion. The induced metric $g = \left(\bp \circ \Psi \right)^* \xi$
 is continuous. Moreover, the Gauss map $\n$ of this immersion is an element of $W^{1,2} \left( \D, \s^2 \right)$.
\end{theo}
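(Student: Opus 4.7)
My plan is to follow Hélein's Coulomb moving frame strategy in the $W^{1,2}$ setting, combining it with Wente's compensated compactness estimate and the classical theory of isothermal coordinates.

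\textbf{Step 1 (Localization).} Because $\int_\Sigma |d\vec{n}|_{g_\Phi}^2\, d\mathrm{vol}_{g_\Phi}$ is finite, absolute continuity of the integral yields a neighborhood $U$ of $x$, diffeomorphic to $\D$ under the reference smooth structure, such that $\int_U |d\vec{n}|^2 < \varepsilon_0$, where $\varepsilon_0$ is Hélein's constant. I work on $U$ through the reference chart.

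\textbf{Step 2 (Coulomb moving frame).} Apply Hélein's moving frame lemma to produce a $W^{1,2}$ orthonormal tangent frame $(e_1, e_2)$ along $\bp|_U$ whose connection 1-form $\omega_{12} := \langle de_1, e_2\rangle$ is co-closed, $d^* \omega_{12} = 0$, with $\|\omega_{12}\|_{L^2}^2 \leq C \int_U |d\vec{n}|^2$. Since $U$ is simply connected, one obtains a scalar potential $\lambda$ with $\omega_{12} = *d\lambda$.

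\textbf{Step 3 (Wente regularity of $\lambda$).} The structure equation $d\omega_{12}$ combined with the Gauss equation writes $-\Delta \lambda$ as a Jacobian of two $W^{1,2}$ maps, up to a controlled term. Wente's inequality yields $\lambda \in C^0(\overline{U}) \cap W^{1,2}(U)$ with $\|\lambda\|_\infty + \|\nabla\lambda\|_{L^2}^2 \leq C\|d\vec{n}\|_{L^2}^2$.

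\textbf{Step 4 (Conformal parametrization).} Setting $\theta_j := \langle d\bp, e_j\rangle$ and $\theta := \theta_1 + i\theta_2$, the structure equations give $d\theta = i\omega_{12} \wedge \theta = i(*d\lambda)\wedge \theta$, a linear $\bar\partial$-type equation with continuous potential. Standard uniformization arguments (as developed in chapter 5 of \cite{bibharmmaps}) then produce complex coordinates $z = u + iv$ on a sub-disk $D \subset U$ containing $x$, parametrizing $\bp$ conformally, with induced metric $(\bp\circ\Psi)^*\xi = e^{2\mu}(du^2 + dv^2)$ for a continuous conformal factor $\mu$ controlled by $\lambda$. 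The bilipschitz character of $\Psi$ is a consequence of the two-sided bound $C_\bp^{-1} g_0 \leq g_\bp \leq C_\bp g_0$ combined with the continuity of $\mu$, and since $W^{1,2}$ is preserved by bilipschitz homeomorphisms of planar domains, $\n \circ \Psi$ lies in $W^{1,2}(\D, \s^2)$.

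The main obstacle is Step 3: only Wente's compensated-compactness estimate upgrades the conformal factor from a generic $W^{1,2}$ function (which fails to embed into $L^\infty$ in dimension two) to a continuous one. Without this upgrade the metric cannot be shown to be continuous, and the uniformization argument in Step 4 collapses; it is precisely here that the $L^2$ bound on the second fundamental form is essential.
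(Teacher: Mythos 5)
Your outline is correct and coincides with the argument the paper relies on: the statement is quoted from Theorem 5.1.1 of H\'elein's book \cite{bibharmmaps}, whose proof is exactly the localization to a small-energy disk, the Coulomb moving frame, the Wente estimate upgrading the conformal factor to $C^0\cap W^{1,2}$, and the integration of the structure equations to produce bilipschitz isothermal coordinates. The paper itself offers no independent proof, so there is nothing further to compare.
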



Further exploiting the Green's function of $ \Sigma$, P. Laurain and T. Rivi\`ere have proven the existence of a specific atlas with higher regularity on the conformal factor (see theorem 3.1 of  \cite{biboptimalestimates}):
\begin{theo}
\label{theooncontrolelegraddufactconf}
Let $(\Sigma, g)$ be a closed Riemann surface of fixed genus greater than one. Let $h$ denote the metric with constant curvature (and volume equal to one in the torus case) in the conformal class of $g$ and $\bp \in \mathcal{E}(\Sigma)$ conformal, that is : 
$$\bp^* \xi = e^{2u} h.$$ 
Then, there exists a finite conformal atlas  $(U_i, \Psi_i)$ and a positive constant $C$ depending only on the genus of $\Sigma$, such that\footnote{The weak-$L^2$ Marcinkiewicz space $L^{2,\infty}(B_1(0))$ is defined as those functions $f$ which satisfy $\:\sup_{\alpha>0}\alpha^2\Big|\big\{x\in B_1(0)\,;\,|f(x)|\ge\alpha\big\}\Big|<\infty$. In dimension two, the prototype element of $L^{2,\infty}$ is $|x|^{-1}\,$. The space $L^{2,\infty}$ is also a Lorentz space, and in particular is a space of interpolation between Lebesgue spaces. See \cite{bibharmmaps} for details.} 
$$ \left\| \nabla \lambda_i \right\|_{L^{2, \infty} \left( V_i \right)} \le  C \left\| \nabla_{\bp^* \xi} \n \right\|^2_{L^2 \left( \Sigma \right)},$$
with $\lambda_i = \frac{1}{2} \log \frac{ \left| \nabla \bp\right|^2}{2}$ the conformal factor of $\bp \circ \Psi_i^{-1}$ in $V_i = \Psi_i (U_i)$.
\end{theo}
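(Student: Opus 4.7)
The plan is to combine the Liouville equation for the conformal factor with the Gauss equation for surfaces in $\R^3$, and then apply the $L^1 \to L^{2,\infty}$ boundedness of $\nabla\Delta^{-1}$ in dimension two. In each chart $V_i = \Psi_i(U_i)$, the pull-back $(\bp\circ\Psi_i^{-1})^*\xi = e^{2\lambda_i}(dx^2+dy^2)$ satisfies the flat Liouville equation
$$-\Delta \lambda_i \;=\; K_g\, e^{2\lambda_i}.$$
For an immersion into $\R^3$, the Gauss equation implies the pointwise bound $|K_g|\le \tfrac{1}{2}|d\n|_g^2$, so the identity $e^{2\lambda_i}dx\,dy = d\mathrm{vol}_g$ yields
$$\|\Delta\lambda_i\|_{L^1(V_i)} \;=\; \int_{U_i}|K_g|\,d\mathrm{vol}_g \;\le\; \tfrac{1}{2}\|\nabla_{\bp^*\xi}\n\|^2_{L^2(\Sigma)}.$$

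Rather than working in a single chart (where one must contend with a harmonic correction carrying boundary data), I would globalize via the constant-curvature representative: write $g = e^{2u}h$ and $h = e^{2\mu_i}(dx^2+dy^2)$ locally, so that $\lambda_i = u + \mu_i$. The function $u$ obeys the global equation $-\Delta_h u = K_g e^{2u} - K_h$ on $\Sigma$; since $K_h$ is a genus-dependent constant and $\mathrm{vol}_h(\Sigma)$ is fixed by Gauss--Bonnet, the right-hand side has $L^1(\Sigma,h)$-norm bounded by $\tfrac{1}{2}\|\nabla_{\bp^*\xi}\n\|_{L^2(\Sigma)}^2$ plus a universal genus-dependent constant. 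Convolving against the Green's function $G_h$ (whose gradient obeys the diagonal estimate $|\nabla_x G_h(x,y)|\lesssim d_h(x,y)^{-1}$, the prototype of $L^{2,\infty}$), Young's inequality for Lorentz spaces $L^1 \ast L^{2,\infty} \hookrightarrow L^{2,\infty}$ produces a bound for $\nabla_h u$ in $L^{2,\infty}(\Sigma)$. Transferring to the chart and adding the smooth contribution $\nabla\mu_i$ yields the claimed bound for $\nabla\lambda_i$.

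The main obstacle is ensuring that the atlas and all implicit constants depend only on the genus, despite the non-compactness of the moduli space of Riemann surfaces of genus $\ge 2$. The Margulis--Mumford thick--thin decomposition of $(\Sigma,h)$ would be the tool: the thick part is covered by a controlled number of geodesic disks with bounded injectivity radius and bounded conformal factor $\mu_i$, finiteness following from the area bound $4\pi(g-1)$; the thin parts are standard hyperbolic collars around short closed geodesics, handled by explicit annular conformal models. The delicate point, and the place where real work is required, is obtaining uniform control of the Green's function $G_h$ and of $\mu_i$ on collars associated with arbitrarily short geodesics, since these estimates must depend only on the topology rather than on the metric. The torus case is treated separately using the flat unit-volume representative, for which the atlas is explicit.
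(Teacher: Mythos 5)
The paper does not actually prove this statement: it is quoted as Theorem 3.1 of Laurain--Rivi\`ere \cite{biboptimalestimates}, so there is no internal proof to compare against. Your outline --- the Liouville equation $-\Delta_h u = K_g e^{2u}-K_h$ for $g=e^{2u}h$, the Gauss-equation bound $|K_g|\le \tfrac12 |d\n|_g^2$, representation of $\nabla_h u$ through $\nabla_x G_h$ together with $L^1\ast L^{2,\infty}\hookrightarrow L^{2,\infty}$, and a thick--thin decomposition to make the atlas and constants depend only on the genus --- is precisely the strategy of that reference. The step you defer, namely the uniform gradient estimate for $G_h$ on collars of arbitrarily short closed geodesics, is not a routine verification: it is the main theorem of \cite{biboptimalestimates} (the naive bound $|\nabla_x G_h(x,y)|\lesssim d_h(x,y)^{-1}$ with a constant depending only on the genus is exactly what fails to be obvious as the conformal class degenerates), so your argument is an accurate reduction to that result rather than a self-contained proof. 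One small point worth making explicit: the additive term $|K_h|\,\mathrm{vol}_h(\Sigma)=2\pi|\chi(\Sigma)|$ in your $L^1$ bound is genuinely absorbed into $C\left\|\nabla_{\bp^*\xi}\n\right\|_{L^2(\Sigma)}^2$, since Gauss--Bonnet gives $\left\|\nabla_{\bp^*\xi}\n\right\|_{L^2}^2\ge 2\int_\Sigma |K_g|\,d\mathrm{vol}_g\ge 4\pi|\chi(\Sigma)|$ whenever $\chi(\Sigma)\neq 0$, while for the torus $K_h=0$ and no such term appears.
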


One can then automatically study any $ {\Phi} \in \mathcal{E} \left(\Sigma \right)$ in such  local conformal charts defined on the unit disk $\D$, as a conformal bilipschitz map ${\Phi} \in \mathcal{E}\left(\D\right)$. 
Nevertheless, if the conformal class degenerates when studying a sequence, the chart of the collar will be conformally equivalent to degenerating annuli. \\

For the sake of brevity, we set once and for all the notation pertaining to $\bp$ that we will adopt, namely

\begin{de}
Let ${\Phi} \in \mathcal{E} \left( \D \right)$ be a weak conformal immersion. We will denote:
\begin{itemize}
\item $\lambda$ its conformal factor, i.e. $e^{2\lambda} dxdy =\bp^*\xi$,
\item $\n$ its Gauss map,
\item $A:= \left\langle \nabla^2 \bp , \n \right\rangle$ its second fundamental form,
\item $H:= \frac{e^{-2\lambda}}{2} \mathrm{Tr}(A)$ its mean curvature with $\vec{H}:=H\vec{n}$,
\item $ \Ar:= A - H e^{2\lambda}\text{Id}$ its tracefree second fundamental form.
\end{itemize}
\end{de}
\noindent
That all these quantities are well defined while requiring as little regularity as possible on $\bp$ is a key reason to adopt the weak formalism to study Willmore immersions.
\begin{remark}
It must be pointed out that $ \big| \Ar \big|^2 d \mathrm{vol}_g  = \big| \Ar e^{- \lambda} \big|^2 dx dy$. The conformal invariant is then written in a local conformal chart: $ \big| \Ar e^{- \lambda} \big|$. It is this quantity which will appear in our estimates. 
\end{remark}

We now recall the notion of weak Willmore immersions (definition I.2 in \cite{bibanalysisaspects}):
\begin{de}

 Let $\bp\in \mathcal{E} \left( \Sigma \right)$. $\bp$  is a weak Willmore immersion if 
\begin{equation} \label{equationwillmorefaible} \mathrm{div} \left(  \nabla \vec{H} -3 \pi_{\n} \left( \nabla \vec{H} \right) + \nabla^\perp \n \times \vec{H} \right) = \vec{0} \end{equation}
holds in a distributional sense in every conformal parametrization $\Psi  \, : \, \D \rightarrow D$ on every neighborhood $D$ of $x$, for every $x \in \Sigma$. Here the operators $\mathrm{div}$, $\nabla$ and $\nabla^\perp = \begin{pmatrix} - \partial_y \\ \partial_x \end{pmatrix} $ are to be understood with respect to the flat metric on $\D$.
\end{de}
Equation \eqref{equationwillmorefaible} is simply the Willmore equation (\ref{willmoche}) written in divergence form well-defined for weak immersions. Thanks to the $\varepsilon$-regularity (theorem I.5 in \cite{bibanalysisaspects}) Willmore immersions are known to be smooth and thus to satisfy the  Willmore equation (\ref{willmoche}) in the classical sense.

\subsection{Conformal Gauss map}
Introduced by R. Bryant (\cite{bibdualitytheorem}, see also J.-H. Eschenburg's \cite{bibeschenburg}), the conformal Gauss map has proven a precious tool in the study of Willmore immersions, and will be pivotal in the present work. We thus briefly review its main properties. 

\begin{de}
Let $\bp\,: \, \Sigma  \rightarrow \R^3$ be an immersion. The conformal Gauss map of $\bp$ is a spacelike application $Y \, : \, \Sigma \rightarrow \s^{4,1} \subset \R^{4,1}$  defined as follows:
\begin{equation} 
\label{230520200727}
Y:= H \begin{pmatrix} \bp \\ \frac{|\bp|^2-1}{2} \\ \frac{|\bp|^2+1}{2} \end{pmatrix} + \begin{pmatrix} \n \\ \langle \n, \bp \rangle \\ \langle \n, \bp\rangle \end{pmatrix}.
\end{equation}
\end{de}
Throughout this paper, $\s^{4,1}$ denotes the de Sitter space in the Lorentz space $\R^{4,1}$, that is, the set of vectors $v$ satisfying $\langle v, v\rangle_{4,1}=1$ with $\langle . , . \rangle_{4,1}$ denoting the Lorentz product in $\R^{4,1}$, not to be confused with the Euclidean product $\langle . , . \rangle$.
Differentiating \eqref{230520200727}, one finds:
\begin{equation} \label{230520200910}
\nabla Y = \nabla H \begin{pmatrix} \bp \\ \frac{ | \bp|^2 -1}{2} \\ \frac{ |\bp|^2 + 1}{2} \end{pmatrix} -e^{-2\lambda} \Ar \begin{pmatrix}\nabla  \bp\\ \langle \nabla \bp, \bp \rangle \\ \langle \nabla \bp, \bp \rangle \end{pmatrix}.
\end{equation}

The conformal Gauss map is deeply linked with conformal geometry, as seen in the following proposition  (which is a merger of theorem 2.4 and proposition 3.3 in \cite{bibnmconfgaussmap}):

\begin{prop}
\label{modificationYtransformationconforme}
Let $\varphi \in \mathrm{Conf}( \R^3)$ and  $\bp \, : \, \Sigma \rightarrow \R^3$ be a smooth  immersion with conformal Gauss map $Y$.  We assume the  set of umbilic points of $\bp$ to be nowhere dense.
Let $Y_\varphi$ be the conformal Gauss map of $\varphi \circ Y$.
Then there exists $M \in SO(4,1)$  such that: $$Y_\varphi = M Y.$$
\end{prop}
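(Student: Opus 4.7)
The plan is to reduce to a generating set of $\mathrm{Conf}(\R^3)$. By Liouville's theorem in dimension three, every such $\varphi$ factors as a finite composition of translations, rotations, dilations, and inversions. Since the $SO(4,1)$ matrices associated to two successive conformal transformations will simply multiply, it suffices to establish the existence of $M$ separately for each of these generators.

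For the first three families the verification is direct. One computes how the building blocks of $Y$ — namely $\bp$, $\n$, $H$, $|\bp|^2$ and $\langle\n,\bp\rangle$ — transform, then reads off a linear action on $Y$ from \eqref{230520200727}. Under the identification of $\mathrm{Conf}(\R^3)$ with (a component of) $O(4,1)$ via the light-cone model of $\R^3\cup\{\infty\}$, translations correspond to a unipotent subgroup, rotations to $SO(3)\hookrightarrow SO(4,1)$ acting diagonally on the middle three coordinates, and dilations $\bp\mapsto\mu\bp$ (which send $H\mapsto H/\mu$ and leave $\n$ unchanged) to a one-parameter boost in the last two coordinates. In each case the resulting matrix preserves $\langle\cdot,\cdot\rangle_{4,1}$ by inspection.

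The substantive case is the inversion $\varphi(x) = x/|x|^2$. Writing $\tilde{\bp} := \varphi\circ\bp$, the classical inversion formulas yield
\begin{align*}
\tilde{\bp} &= \frac{\bp}{|\bp|^2},\\
\tilde{\n} &= \n - \frac{2\langle\n,\bp\rangle}{|\bp|^2}\,\bp,\\
\tilde{H} &= |\bp|^2 H - 2\langle\n,\bp\rangle.
\end{align*}
Plugging these into the definition \eqref{230520200727} of $Y_\varphi$ and carefully regrouping, the entries $\tfrac{|\tilde{\bp}|^2\pm 1}{2}$ produce the combinations $\tfrac{1\pm|\bp|^2}{2|\bp|^2}$, which should pair against the last two components of $Y$ exactly as the Lorentz structure prescribes. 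Collecting coefficients, the expression factors as a \emph{constant} matrix $M$ applied to $Y$, and one then checks that $M$ is the standard representation of the inversion in $SO(4,1)$.

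The main obstacle is the bookkeeping in this inversion step: many cross terms in $|\bp|^2$ and $\langle\n,\bp\rangle$ appear, and one must verify they reassemble into a linear combination independent of the point on $\Sigma$. The hypothesis that the umbilic set is nowhere dense is used to ensure that the normal $\n$, and hence $Y$, can be chosen smoothly and unambiguously on an open dense subset; the identity $Y_\varphi = MY$ then extends to all of $\Sigma$ by continuity. A cleaner alternative would be to identify $Y$ with the oriented tangent sphere map and invoke the well-known fact that $\mathrm{Conf}(\R^3)$ acts by Lorentz transformations on the space of oriented spheres, but the generator-by-generator computation sketched above is self-contained.
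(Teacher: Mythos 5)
Your overall strategy---reduce to generators of $\mathrm{Conf}(\R^3)$ via Liouville's theorem and verify the linear action of $SO(4,1)$ on $Y$ generator by generator---is the standard and correct route; note that the paper itself does not prove this proposition but imports it from \cite{bibnmconfgaussmap} (it only records the explicit inversion matrix in \eqref{230520200815}). However, the one step you identify as substantive, the inversion, is asserted rather than carried out, and the transformation formulas you feed into it are mutually inconsistent, so the claimed regrouping would in fact fail as written. With $\tilde{\n}=\n-\frac{2\langle\n,\bp\rangle}{|\bp|^2}\,\bp$ the compatible mean-curvature law is $\tilde{H}=|\bp|^2H+2\langle\n,\bp\rangle$ (test it on a round sphere centred at the origin), not $|\bp|^2H-2\langle\n,\bp\rangle$. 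With your signs the first three components of $Y_\varphi$ come out as $H\bp+\n-\frac{4\langle\n,\bp\rangle}{|\bp|^2}\,\bp$, and the term $\langle\n,\bp\rangle\,\bp/|\bp|^2$ is not expressible as a constant linear combination of the five components of $Y$, so no constant $M$ exists. With the corrected sign the offending terms cancel and one finds $Y_\varphi=\mathrm{diag}(\mathrm{Id},-1,1)\,Y$ (or $\mathrm{diag}(-\mathrm{Id},1,-1)\,Y$ if the normal is flipped to restore orientation, which is the convention behind \eqref{230520200815}); this is the computation your proof must actually display, since it is the entire content of the proposition.

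Two smaller points. First, your explanation of the umbilic hypothesis is off: the Gauss map $\n$ of an oriented immersion is smooth and unambiguous regardless of umbilic points, so that is not what the hypothesis is for. By \eqref{230520200910}, $Y$ fails to be an immersion exactly at umbilic points; the nowhere-dense assumption is what the cited reference uses to keep the central sphere congruence nondegenerate (e.g.\ to pin down $M$ uniquely), and for the bare existence statement your pointwise computation does not actually need it. Second, for an inversion centred at a point of $\bp(\Sigma)$ the composition $\varphi\circ\bp$ is only defined away from the preimage of the centre; the identity $Y_\varphi=MY$ should be stated on that open dense set.
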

\begin{remark}
The connection between $\varphi$ and $M$ is explicitly known, and we refer the reader to equalities (5)-(8) in \cite{bibnmconfgaussmap} for details. We will use the corresponding matrices of inversion and translation in \eqref{230520200815} below.
\end{remark}

The following proposition (see proposition 2 in J.-H. Eschenburg's \cite{bibeschenburg} or theorem 4.2 in \cite{bibnmconfgaussmap}) will clarify our interest in the conformal Gauss map in the present context:

\begin{prop}
\label{230520200838}
Let $\bp \,: \, \D \rightarrow \R^3$ be a conformal immersion. Its conformal Gauss map $Y \, : \, \D \rightarrow \s^{4,1}$ is a conformal map with conformal factor $| \Ar e^{-\lambda}|^2$.
Further, $Y$ is minimal if and only if $\bp$ is Willmore.
\end{prop}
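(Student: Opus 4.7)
The plan is to split the proof into two steps: first establish conformality and identify the conformal factor by a direct tensorial computation, then use the conformal property to reduce minimality of $Y$ to harmonicity, and finally identify the harmonicity equation with the Willmore equation.

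\emph{Step 1 (Conformality).} I would start from the formula \eqref{230520200910} and introduce the auxiliary $\mathbb{R}^{4,1}$-valued vectors
$$v := \begin{pmatrix}\bp \\ \tfrac{|\bp|^{2}-1}{2} \\ \tfrac{|\bp|^{2}+1}{2}\end{pmatrix}, \qquad e_{k} := \begin{pmatrix}\partial_{k}\bp \\ \langle\partial_{k}\bp,\bp\rangle \\ \langle\partial_{k}\bp,\bp\rangle\end{pmatrix},$$
so that \eqref{230520200910} reads $\partial_{j}Y = \partial_{j}H\, v - e^{-2\lambda}\Ar_{jk}e_{k}$. Three short algebraic identities carry the computation: $\langle v,v\rangle_{4,1}=0$ (which follows from the identity $|\bp|^{2} + (\tfrac{|\bp|^{2}-1}{2})^{2} - (\tfrac{|\bp|^{2}+1}{2})^{2}=0$), $\langle v,e_{k}\rangle_{4,1}=0$ (same cancellation mechanism), and $\langle e_{j},e_{k}\rangle_{4,1}=\langle\partial_{j}\bp,\partial_{k}\bp\rangle=e^{2\lambda}\delta_{jk}$ by conformality of $\bp$. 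Substituting, every $\nabla H$-containing cross term drops out and only the $\Ar\Ar^{t}$-term survives, yielding
$$\langle\partial_{j}Y,\partial_{k}Y\rangle_{4,1} \;=\; e^{-2\lambda}\sum_{l}\Ar_{jl}\Ar_{kl}.$$
Since $\Ar$ is a traceless symmetric $2\times 2$ matrix, $\Ar\Ar^{t}$ is a scalar multiple of the identity, and the resulting scalar is (up to the convention fixing the normalization of $|\Ar|$) precisely $|\Ar e^{-\lambda}|^{2}$, establishing conformality with the announced factor.

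\emph{Step 2 (Minimal $\Leftrightarrow$ Willmore).} Since $Y$ is a conformal map from a two-dimensional domain, the standard Osserman-type equivalence gives that $Y$ is minimal into $\s^{4,1}$ if and only if it is harmonic. The embedding $\s^{4,1}\subset\R^{4,1}$ has outward normal $Y$ itself and second fundamental form $\mathrm{I\!I}(X,X)=-\langle X,X\rangle_{4,1}Y$; consequently harmonicity of $Y$ is equivalent to the condition that the $\R^{4,1}$-valued flat Laplacian $\Delta Y$ be parallel to $Y$. I would then differentiate \eqref{230520200910} once more. Using the identity $\partial_{j}v=e_{j}$ to collect structural terms and invoking the Codazzi identity $\nabla^{j}\Ar_{ij}=\nabla_{i}H$ to rewrite the divergence of $\Ar$ in terms of $\nabla H$, the contributions aligned with the $Hv$ and $w := (\n,\langle\n,\bp\rangle,\langle\n,\bp\rangle)^{t}$ directions recombine into a multiple of $Y = Hv+w$, and the remaining residual is a nonzero scalar multiple of $(\Delta_{g}H + |\Ar|_{g}^{2}H)\, v$. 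Since $v$ is not collinear with $Y$ away from the (measure-zero) umbilic locus, $Y$ is minimal iff $\Delta_{g}H + |\Ar|_{g}^{2}H=0$, which is exactly the Willmore equation \eqref{willmoche}.

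\emph{Main obstacle.} The first step is purely algebraic and is trivialised by the three nullity/orthogonality identities for $v$ and $e_{k}$. The real work lies in the second step: the challenge is to organise $\Delta Y$ so that, after an application of Codazzi, every term not proportional to $Y$ collapses to the Willmore operator applied to $H$. This is routine but bookkeeping-heavy, and one must carefully separate the $v$, $e_{k}$ and $w$ contributions since $\{v, e_{1}, e_{2}, w\}$ form a natural frame along $Y$ in which the Willmore equation emerges as the one residual non-tangential component.
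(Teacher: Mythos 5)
Your argument is correct and is essentially the standard proof of this proposition, which the paper itself does not reprove but imports from the cited references (Eschenburg's proposition 2 and theorem 4.2 of the conformal Gauss map paper): the three nullity/orthogonality identities for $v$ and $e_k$ in Step 1, and the reduction of $\Delta Y$ modulo the normal direction $Y$ to the Willmore operator applied to $H$ times the null vector $v$ via Codazzi in Step 2, are exactly the computations carried out there. Two minor points could be tightened: the factor-of-two normalisation in the conformal factor (which you already flag as conventional), and the fact that $v$ is \emph{never} collinear with $Y$ — since $\langle v,v\rangle_{4,1}=0$ while $\langle Y,Y\rangle_{4,1}=1$ and $v\neq 0$ — so no restriction to the non-umbilic locus is needed for the final implication.
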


\subsection{Summary of Main Results}

The main result of our paper is
\begin{theo}
\label{mainth1}
Let $\bp\in \mathcal{E}\left(\D \right)$ be a conformal weak Willmore immersion with normal vector $\vec{n}$ and conformal parameter $\lambda$.
Assume 
\begin{equation}
\label{210520201541}
\int_{\D} \left| \nabla \n \right|^2 \le \frac{4\pi}{3},
\end{equation}
and
\begin{equation}
\label{210520201621}
\left\| \nabla \lambda \right\|_{L^{2,\infty} \left( \D \right) } \le C_0,
\end{equation}
for some constant $C_0>0$. \\
There exist constants $\varepsilon_0(C_0)>0$ and $C(C_0) >0$ such that, if $$\left\|  \Ar e^{-\lambda} \right\|_{L^2\left( \D \right)} \le \varepsilon_0,$$ then
$$ \left\| \Ar  e^{-\lambda} \right\|_{L^\infty \left( \D_{\frac{1}{2}} \right) } \le C \left\|  \Ar e^{-\lambda} \right\|_{L^2 \left( \D \right)}.$$
\end{theo}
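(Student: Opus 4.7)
The plan is to upgrade $L^2$-smallness of $\Ar e^{-\lambda}$ to a pointwise bound by working with the conformal Gauss map $Y\colon\D\to\s^{4,1}$. By Proposition~\ref{230520200838}, $Y$ is conformal with conformal factor $|\Ar e^{-\lambda}|^2$, and since $\bp$ is Willmore it is moreover minimal, hence harmonic from $(\D,|dz|^2)$ into $\s^{4,1}$. The quantity we seek to control is thus encoded, up to a factor $\sqrt{2}$, as the Euclidean gradient of $Y$: $|\nabla Y|^2_{4,1} = 2|\Ar e^{-\lambda}|^2$. In particular, the smallness hypothesis on $\|\Ar e^{-\lambda}\|_{L^2(\D)}$ is really a smallness hypothesis on the Dirichlet energy of $Y$.

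Harmonicity combined with $\langle Y,Y\rangle_{4,1}=1$ (which yields $\langle Y,\nabla Y\rangle_{4,1}=0$) produces the familiar "harmonic map into a sphere" equation
\[
-\Delta Y \;=\; |\nabla Y|^2_{4,1}\,Y \;=\; 2\,|\Ar e^{-\lambda}|^2\,Y,
\]
which can be recast in Rivière's antisymmetric-potential form $-\Delta Y^i=\Omega^i_{\ j}\cdot\nabla Y^j$ with $\Omega^{ij}:=Y^i\nabla Y^j-Y^j\nabla Y^i$ antisymmetric in the Minkowski pairing. The strategy is then to apply the $\varepsilon$-regularity for Schrödinger systems with antisymmetric potentials (in the spirit of theorem~I.5 of \cite{bibanalysisaspects}) to the pair $(Y,\Omega)$. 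Once $\|\Omega\|_{L^2}$ is small enough, one obtains Morrey-type decay on $\nabla Y$, and standard bootstrapping yields the desired estimate $\|\nabla Y\|_{L^\infty(\D_{1/2})}\le C\|\nabla Y\|_{L^2(\D)}$, which, via $|\nabla Y|^2_{4,1}=2|\Ar e^{-\lambda}|^2$, is exactly the conclusion.

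For this scheme to run, $Y$ itself must be pointwise controlled so that $\|\Omega\|_{L^2}$ can be bounded by $\|\nabla Y\|_{L^2}$. The assumptions $\int_\D|\nabla\n|^2\le 4\pi/3$ and $\|\nabla\lambda\|_{L^{2,\infty}(\D)}\le C_0$ place us squarely in the regime of Rivière's intrinsic $\varepsilon$-regularity, giving $L^\infty$ bounds depending only on $C_0$ on $\lambda$, $\n$ and $H$ on, say, $\D_{3/4}$. To control $Y$, which through~\eqref{230520200727} involves $\bp$ and $|\bp|^2$, one composes with a translation sending $\bp(0)$ to the origin and possibly a dilation: by Proposition~\ref{modificationYtransformationconforme} such a conformal change only left-multiplies $Y$ by a fixed element of $SO(4,1)$ and leaves $|\Ar e^{-\lambda}|$ invariant, so no estimate is lost, while $|Y|$ becomes bounded on $\D_{3/4}$ by a constant depending only on $C_0$. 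Consequently
\[
\|\Omega\|_{L^2(\D_{3/4})}\;\le\;C(C_0)\,\|\nabla Y\|_{L^2(\D_{3/4})}\;\le\;C(C_0)\,\|\Ar e^{-\lambda}\|_{L^2(\D)},
\]
which can be made as small as required by choosing $\varepsilon_0$ sufficiently small in terms of $C_0$.

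The main obstacle is twofold. First, the indefinite signature of $\s^{4,1}$ forces an adaptation of the Hélein/Rivière moving-frame machinery: one must work with $\Omega$ antisymmetric in the Minkowski pairing rather than the Euclidean one, and verify that the compensation-compactness arguments (Wente-type estimates, gauge constructions) still furnish the Morrey decay. Second, and related, the potential $\Omega$ is built quadratically from $Y$, which is only a priori bounded for the Minkowski pseudo-norm; the preliminary Euclidean normalization via Propositions~\ref{modificationYtransformationconforme} and Rivière's $\varepsilon$-regularity is therefore essential and must be carried out carefully to preserve the conformal factor $|\Ar e^{-\lambda}|$. Handling these two points is the technical heart of the argument; the final translation from an $L^\infty$-bound on $\nabla Y$ back to the sought-after bound on $\Ar e^{-\lambda}$ is then immediate.
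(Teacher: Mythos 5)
Your reduction to the conformal Gauss map founders on the distinction between the Lorentzian energy of $Y$ and its Euclidean Dirichlet energy, and this is not a technical point that can be deferred: it is where the argument breaks. From \eqref{230520200910}, $\nabla Y$ contains the term $\nabla H\,\bigl(\bp,\tfrac{|\bp|^2-1}{2},\tfrac{|\bp|^2+1}{2}\bigr)^T$, and this vector is \emph{null} for $\langle\cdot,\cdot\rangle_{4,1}$ and Minkowski-orthogonal to the $\Ar$-terms; that is exactly why $\langle\nabla Y,\nabla Y\rangle_{4,1}$ equals $2|\Ar e^{-\lambda}|^2$ and sees nothing of $\nabla H$. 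Hence the hypothesis $\|\Ar e^{-\lambda}\|_{L^2}\le\varepsilon_0$ makes the \emph{Lorentzian} energy small but gives no control on the Euclidean norm $\|\nabla Y\|_{L^2}$, which carries the full $|\nabla H|$. Your chain $\|\Omega\|_{L^2}\le C\|\nabla Y\|_{L^2}\le C\|\Ar e^{-\lambda}\|_{L^2}$ is therefore unjustified at the second inequality, and the smallness of $\|\Omega\|_{L^2}$ required by any antisymmetric-potential or Wente-type $\varepsilon$-regularity (or by theorem \ref{theoZhu}, which explicitly assumes $\|\nabla u\|_{L^2}$ small in the Euclidean norm) is simply not available. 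Even granting that machinery, its conclusion would read $\|\nabla Y\|_{L^\infty}\le C\|\nabla Y\|_{L^2}$ with the Euclidean norm on the right, which still does not reduce to $\|\Ar e^{-\lambda}\|_{L^2}$. This Lorentzian-versus-Euclidean gap is precisely what separates theorem \ref{theoconfGauss} from theorem \ref{theoZhu} in the paper's own discussion, so your proposal in effect assumes the hard part.

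The paper's proof takes a different route designed to circumvent this. Gauss--Codazzi in divergence form combined with the Bourgain--Brezis theorem gives $\|(H-\overline H)e^{\lambda}\|_{L^2}\le C\|\Ar e^{-\lambda}\|_{L^2}$ and the analogous bound for $Y-\overline Y$, whence $\langle\overline Y,\overline Y\rangle_{4,1}\ge \tfrac12$ for $\varepsilon_0$ small. This positivity guarantees the existence of an inversion $\Theta$ with $\overline{H}_{\Theta\circ\bp}=0$ (and controlled conformal factor), after which the \emph{full} second fundamental form of $\Theta\circ\bp$ is small in $L^2$, the classical $\varepsilon$-regularity of \cite{bibanalysisaspects} applies, and conformal invariance of $|\Ar e^{-\lambda}|$ transfers the bound back to $\bp$. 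The conformal Gauss map enters only through its average $\overline Y$, not as the unknown of a harmonic-map system. To salvage your scheme you would essentially have to rediscover this normalization: some conformal transformation killing the mean-curvature average is needed before the null component of $\nabla Y$ can be controlled at all.
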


The small energy hypothesis \eqref{210520201541} also appears in theorem I.5 of \cite{bibanalysisaspects}, the difference here is that the resulting inequality only involves the tracefree curvature without relying on the whole second fundamental form. We will in fact establish more than theorem \ref{mainth1} and we show how to recover theorem I.5 of \cite{bibanalysisaspects} up to a conformal transformation whose conformal factor is controlled. 
This is the main idea of the proof: if $\Vert \mathring{A}_\bp\Vert_2$ is small enough we can find a conformal transformation $T$ such that $\Vert A_{\tilde{\bp}}\Vert_2$ is small, where $\tilde{\bp}=T\circ \bp$.

\begin{cor}
    \label{230520201048}
    Let $\bp \in \mathcal{E}\left(\D \right)$  be a conformal weak Willmore immersion.

    Assume 
    \begin{equation}
    \label{210520201541bus}
    \int_{\D} \left| \nabla \n \right|^2 \le \frac{4\pi}{3},
    \end{equation}
    and
    \begin{equation}
    \label{210520201621}
    \left\| \nabla \lambda \right\|_{L^{2,\infty} \left( \D \right) } \le C_0,
    \end{equation}
for some constant $C_0>0$. \\
    There exist constants $\varepsilon_0(C_0)>0$, $C(C_0) >0$ such that
    if $$\left\| \Ar e^{-\lambda} \right\|_{L^2\left( \D_\rho \right)} \le \varepsilon_0,$$
    then one can find a conformal transformation $\Theta$ such that, setting $\tilde{\bp}=\Theta\circ \bp$ and denoting $\vec{n}_{\tilde{\bp}}$ its Gauss map, one has
    $$ \frac{\Vert \nabla\bp\Vert_\infty}{C}\leq \Vert \nabla \tilde{\bp} \Vert_\infty \leq C  \Vert \nabla \bp \Vert_\infty$$ 
    and    
    $$ \left\| \nabla \vec{n}_{\tilde{\bp}} \right\|_{L^\infty \left( \D_{\frac{1}{2}} \right) } \le C \left\| \Ar e^{-\lambda} \right\|_{L^2 \left( \D \right)}.$$
\end{cor}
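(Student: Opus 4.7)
The strategy is to find a conformal transformation $\Theta\in\mathrm{Conf}(\R^3)$ so that $\tilde\bp=\Theta\circ\bp$ satisfies the hypotheses of Rivière's classical $\varepsilon$-regularity (Theorem I.5 of \cite{bibanalysisaspects}); the desired $L^\infty$ bound on $\nabla\n_{\tilde\bp}$ then follows. Using the identity $|\nabla\n|^2=2H^2e^{2\lambda}+|\Ar e^{-\lambda}|^2$ together with the pointwise conformal invariance of $|\Ar e^{-\lambda}|$, the task reduces to producing $\Theta$ for which $\tilde H$ is pointwise small on $\D_{1/2}$ and for which the conformal factor satisfies $e^{\tilde\lambda}\simeq e^{\lambda}$.

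The construction of $\Theta$ exploits the conformal Gauss map. By Proposition~\ref{modificationYtransformationconforme}, $\mathrm{Conf}(\R^3)$ acts on $Y$ via $SO(4,1)$, and by transitivity of this action on the spacelike hyperboloid $\s^{4,1}$ we may choose $\Theta$ so that $Y_{\tilde\bp}(0)=e_1$. The formula \eqref{230520200727} gives $\tilde H=(Y_{\tilde\bp})_5-(Y_{\tilde\bp})_4=-\langle Y_{\tilde\bp},v\rangle_{4,1}$ for the null vector $v=(0,0,0,1,1)$, so this normalization forces $\tilde H(0)=0$. The residual $SO(3,1)$-stabilizer of $e_1$, combined with composition by translations of $\R^3$, is then expended to arrange $\tilde\bp(0)=0$ and to keep the inversion centre of $\Theta$ (if any) at a controlled positive distance from $\bp(\D)$. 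This last requirement delivers the conformal factor equivalence $\|\nabla\bp\|_\infty\simeq\|\nabla\tilde\bp\|_\infty$ with constants depending only on $C_0$.

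Theorem~\ref{mainth1} applied to $\bp$ yields $|\Ar e^{-\lambda}|\le C\|\Ar e^{-\lambda}\|_{L^2(\D)}$ pointwise on $\D_{1/2}$; by the pointwise invariance recorded in the remark following Definition 1.2, the analogous bound holds for $\tilde\bp$. Proposition~\ref{230520200838} identifies $\tilde Y$ as a weakly conformal minimal immersion into $\s^{4,1}$ whose conformal factor is $|\Ar e^{-\lambda}|^2$, so the above translates into the Lipschitz estimate $|\nabla\tilde Y|_{4,1}\le C\|\Ar e^{-\lambda}\|_{L^2(\D)}$ on $\D_{1/2}$. Anchoring $\tilde Y(0)=e_1$ and integrating along radial paths, while using the de Sitter constraint $\langle\tilde Y,\tilde Y\rangle_{4,1}=1$ to pin $\tilde Y_1\simeq 1$ and confine the trajectory to a neighborhood of $e_1$ where the Lorentz and Euclidean norms are comparable, produces the pointwise bound $|\tilde Y(p)-e_1|\le C\|\Ar e^{-\lambda}\|_{L^2(\D)}$. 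The identification $\tilde H(p)=-\langle\tilde Y(p)-e_1,v\rangle_{4,1}$ then gives $|\tilde H(p)|\le C\|\Ar e^{-\lambda}\|_{L^2(\D)}$; combined with the conformal factor equivalence and the pointwise bound on $|\Ar e^{-\tilde\lambda}|$, the decomposition of $|\nabla\n|^2$ above finishes the argument. The principal obstacle is choosing an element of the $SO(4,1)$-normalization coset whose associated Möbius transformation is geometrically well-behaved—in particular whose inversion centre is quantitatively separated from $\bp(\D)$, with the separation depending only on $C_0$; a secondary delicate point is the passage from the Lorentz Lipschitz estimate on $\tilde Y$ to a genuine Euclidean pointwise bound, which is handled via the $\s^{4,1}$-constraint and the normalization at the base point.
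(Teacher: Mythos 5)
Your overall strategy---use the conformal Gauss map to normalise $H$ by a Möbius transformation and then invoke Rivière's $\varepsilon$-regularity---is the right one and matches the paper's in spirit, but the mechanism you propose for controlling $\tilde H$ fails. You normalise $\tilde Y(0)=e_1$ (so $\tilde H(0)=0$) and then propagate via the bound $\langle\nabla\tilde Y,\nabla\tilde Y\rangle_{4,1}=2\vert\Ar e^{-\lambda}\vert^2\le C\varepsilon_0^2$ to claim a \emph{Euclidean} pointwise bound $\vert\tilde Y(p)-e_1\vert\le C\varepsilon_0$, hence $\vert\tilde H(p)\vert\le C\varepsilon_0$. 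This step is not a ``delicate point'' but a genuine obstruction: by \eqref{230520200910}, $\nabla Y$ decomposes as $\nabla H$ times the \emph{null} vector $\bigl(\bp,\frac{|\bp|^2-1}{2},\frac{|\bp|^2+1}{2}\bigr)$ plus an $\Ar$-term, so the $\nabla H$ component is invisible to the Lorentz square. Concretely $H=-\langle Y,v\rangle_{4,1}$ with $v=(0,0,0,1,1)$ null and $\langle\nabla Y,v\rangle_{4,1}=-\nabla H$; pairing a null vector against an increment whose Lorentz square is small yields no estimate (Cauchy--Schwarz fails in signature $(4,1)$), and the constraint $\langle\tilde Y,\tilde Y\rangle_{4,1}=1$ does not confine $\tilde Y$ near $e_1$: the hyperboloid contains the entire null ray $e_1+sv$, and for any $\delta,M>0$ there are points $Y\in\s^{4,1}$ with $\vert\langle Y-e_1,Y-e_1\rangle_{4,1}\vert\le\delta$ and $\vert Y_5-Y_4\vert\ge M$. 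A pointwise bound on $H$ in terms of $\Ar$ is precisely what is \emph{not} available for Willmore immersions; this is why the paper resorts to the divergence form of Gauss--Codazzi and Bourgain--Brezis duality in the first place.

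The paper's actual route avoids any pointwise control of $H$: Proposition \ref{200320201141} cancels the \emph{average} $\overline H_\bpsi$ over $\D_{\frac34}$ (the admissible inversion centres $\vec a$ form the sphere $\overline{\s}_Y$, nonempty because $\langle\overline Y,\overline Y\rangle_{4,1}\ge\frac12$ by Corollary \ref{corY}); Corollary \ref{corf}---an $L^2$ estimate obtained by testing $\nabla H$ against the Bourgain--Brezis vector field---then gives $\Vert H_\bpsi e^{\lambda_\bpsi}\Vert_{L^2}\le C\Vert\Ar_\bpsi e^{-\lambda_\bpsi}\Vert_{L^2}$, whence $\Vert\nabla\n_\bpsi\Vert_{L^2(\D_{3/4})}\le C\varepsilon_0\le\frac{4\pi}{3}$, and only \emph{then} does Rivière's $\varepsilon$-regularity upgrade to $L^\infty$. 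Note also that your ``principal obstacle''---choosing the inversion centre $\vec a$ at a distance from $\bp(\D)$ bounded above and below by constants depending only on $C_0$, which is what yields both the persistence of \eqref{lbound}--\eqref{elbound} and the comparability $\Vert\nabla\tilde\bp\Vert_\infty\simeq\Vert\nabla\bp\Vert_\infty$---is asserted rather than proved; in the paper it is the Claim in the proof of Proposition \ref{200320201141}, relying on the explicit formula \eqref{130320201349}, the bound \eqref{Yuni} on $\Vert\overline Y\Vert$, and the lower bound on the radius of $\overline{\s}_Y$. As written, the proposal does not constitute a proof.
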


The next achievement of the present paper consists in removing the small energy hypothesis in theorem \ref{mainth1}, as indicated in the next statement. 

\begin{theo}
    \label{mainth2}
    Let $\bp \in \mathcal{E}\left(\D \right)$ be a conformal weak Willmore immersion.
    Assume 
    \begin{equation}
    \label{Mest1}
    \int_{\D} \left| \nabla \n \right|^2 \le C_0,
    \end{equation}
    and
    \begin{equation}
    \label{Mest2}
    \left\| \nabla \lambda \right\|_{L^{2,\infty} \left( \D \right) } \le C_0,
    \end{equation}
for some constant $C_0>0$. \\
    There exists constants $\varepsilon_0(C_0)>0$ and $C(C_0) >0$ such that, if $$\left\| \Ar e^{-\lambda} \right\|_{L^2\left( \D \right)} \le \varepsilon_0,$$ then
    \begin{equation} \label{Mest3} \left\| \Ar e^{-\lambda} \right\|_{L^\infty \left( \D_{\frac{1}{2}} \right) } \le C \left\| \Ar e^{-\lambda} \right\|_{L^2 \left( \D \right)}.\end{equation}
\end{theo}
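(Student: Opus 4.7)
The plan is to reduce Theorem~\ref{mainth2} to Theorem~\ref{mainth1} through a covering-and-rescaling argument, combined with a bubble analysis to handle concentration of the Gauss-map energy.

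\emph{Covering scheme.} For each $x_0 \in \D_{1/2}$, set
$$r(x_0) := \sup\Bigl\{\, r \in (0, \tfrac{1}{4}] \,:\, \int_{\D_r(x_0)} |\nabla \vec{n}|^2 \leq \tfrac{4\pi}{3}\,\Bigr\}.$$
On the rescaled immersion $y \mapsto \Phi(x_0 + r(x_0)\,y)$ on $\D$, each of the scale-invariant quantities $\int|\nabla\vec{n}|^2$, $\|\nabla\lambda\|_{L^{2,\infty}}$ and $\|\Ar e^{-\lambda}\|_{L^2}$ is preserved and $\int_\D|\nabla\vec{n}|^2 \leq 4\pi/3$ holds by construction; Theorem~\ref{mainth1} therefore yields
$$r(x_0)\,|\Ar e^{-\lambda}|(x_0) \;\leq\; C(C_0)\,\|\Ar e^{-\lambda}\|_{L^2(\D_{r(x_0)}(x_0))}\;\leq\;C(C_0)\,\varepsilon_0.$$
Thus if one can exhibit a uniform lower bound $r(x_0) \geq r_0(C_0) > 0$ over all admissible $\Phi$ and all $x_0 \in \D_{1/2}$, then~\eqref{Mest3} follows.

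\emph{Bubble analysis.} The crux is the failure mode $r(x_0) \to 0$, corresponding to concentration of $|\nabla\vec{n}|^2$ at $x_0$. Arguing by contradiction on a sequence $\Phi_k \in \mathcal{E}(\D)$ with $r_k := r(x_k) \to 0$, the decomposition
$$\int |\nabla \vec{n}_k|^2 \;=\; \|\Ar_k e^{-\lambda_k}\|_{L^2}^2 + 2W(\Phi_k),$$
together with $\|\Ar_k e^{-\lambda_k}\|_{L^2} \leq \varepsilon_0$, forces the concentration to be entirely of Willmore (mean curvature) type. Rescaling at scale $r_k$ around $x_k$ and using weak compactness of conformal immersions with uniformly bounded second fundamental form (combined with the conformal-factor control of Theorem~\ref{theooncontrolelegraddufactconf}) produces a subsequential weak limit $\tilde{\Phi}_\infty : \R^2 \to \R^3$. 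Passing~\eqref{equationwillmorefaible} to the limit, $\tilde{\Phi}_\infty$ is a weak Willmore immersion; lower semicontinuity yields $\Ar_\infty \equiv 0$ (taking $\varepsilon_0$ small enough), so $\tilde{\Phi}_\infty$ is totally umbilic---a round sphere or a plane. The plane is excluded by $\int_{\D_1}|\nabla\vec{n}_\infty|^2 = 4\pi/3 \neq 0$.

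\emph{Main obstacle.} The genuinely delicate step is the sphere case, for a suitable conformal parametrization of $\R^2$ onto a sphere can indeed carry exactly $4\pi/3$ of Gauss-map energy on $\D_1$. The key observation is that the limit sphere has $\Ar_\infty e^{-\lambda_\infty} \equiv 0$ everywhere, so by combining the $\varepsilon$-regularity of Theorem~\ref{mainth1} on sub-disks of small Gauss-map energy (which exist away from the bubble point) with the strong pointwise convergence inherited from this $\varepsilon$-regularity, one shows that $|\Ar_k e^{-\lambda_k}|$ is uniformly controlled on a full neighborhood of $x_k$, yielding~\eqref{Mest3} even when $r_k$ degenerates. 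Translating the qualitative ``vanishing on the limit sphere'' into the quantitative bound~\eqref{Mest3}, uniformly in $x_0$ and in the sequence, is the principal technical difficulty; it requires a careful matching between the covering scheme and the bubble profile, and may be effected through the conformal-Gauss-map perspective of Proposition~\ref{230520200838}, under which $Y_k$ is a spacelike minimal immersion into $\s^{4,1}$ whose area $\|\Ar_k e^{-\lambda_k}\|_{L^2}^2$ is small---a setting in which standard $\varepsilon$-regularity arguments for minimal immersions can be brought to bear.
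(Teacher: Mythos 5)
Your overall architecture (contradiction, blow-up at the concentration scale, identification of the bubble as a totally umbilic sphere) matches the paper's, but the proof has a genuine gap exactly where you flag the ``main obstacle'': you never actually rule out that a round-sphere bubble destroys the estimate, and the avenue you suggest for doing so does not work. The conformal-Gauss-map route degenerates precisely in the regime you need it: for a round sphere $\Ar\equiv 0$, so by Proposition~\ref{230520200838} the conformal Gauss map $Y$ has conformal factor $|\Ar e^{-\lambda}|^2\to 0$ and is \emph{not} an immersion near the bubble; there is no ``standard $\varepsilon$-regularity for minimal immersions into $\s^{4,1}$'' to invoke there (indeed Theorem~\ref{theoconfGauss} is itself a corollary of Theorem~\ref{mainth2}, so this would be circular). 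The paper's resolution is a specific geometric argument you are missing: after locating the concentration point and identifying the bubble $\omega$ as a round sphere (via energy quantization from \cite{bibenergyquant} and the fact that branched Willmore spheres are conformally minimal, so small tracefree energy forces a round sphere), one inverts at $p=\omega(\infty)$, turning the sphere into a plane; the neck argument of Laurain--Rivi\`ere (theorem 0.2 of \cite{MR3843372}) then shows that any residual concentration would generate a \emph{non-umbilic} bubble between the sphere and the plane, contradicting the smallness of $\Vert\Ar e^{-\lambda}\Vert_{L^2}$. This kills the concentration after inversion, Theorem~\ref{mainth1} applies on a fixed neighborhood, and conformal invariance of $|\Ar e^{-\lambda}|$ transports the estimate back. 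That inversion-plus-neck step is the substantive content of the proof and cannot be replaced by the vague ``matching between the covering scheme and the bubble profile'' you describe.

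Two secondary issues. First, your exclusion of the plane as blow-up limit via $\int_{\D_1}|\nabla\vec{n}_\infty|^2=4\pi/3$ is not justified: the rescaled sequence can have secondary concentration points inside $\D_1$, so the weak limit may carry strictly less than $4\pi/3$ (possibly zero) of energy; one needs either the no-neck/quantization results or a blow-up at the smallest concentration scale to argue this. Second, your weak-compactness step producing a totally umbilic limit glosses over branch points of the bubble; the paper handles these by noting that simple round bubbles cannot have a single branch point, after first establishing simplicity of the bubbles via the same non-umbilic-neck argument.
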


By making the change of variables $\bp_\rho = \bp ( \rho\,\cdot) $ (as in \cite{bibnmheps}) one can easily extend these results to disks of arbitrary radius $\rho$:

\begin{cor}
\label{coro}
Let $\bp \in \mathcal{E}\left(\D_\rho \right)$  be a conformal weak Willmore immersion.
Assume 
\begin{equation}
\label{210520201541bus}
\int_{\D_\rho} \left| \nabla \n \right|^2 \le C_0,
\end{equation}
and
\begin{equation}
\label{210520201621}
\left\| \nabla \lambda \right\|_{L^{2,\infty} \left( \D_\rho \right) } \le C_0,
\end{equation}
for some constant $C_0>0$. \\
Then there exists a constant $\varepsilon_0(C_0)>0$  such that, if $$\left\| \Ar e^{-\lambda} \right\|_{L^2\left( \D_\rho \right)} \le \varepsilon_0,$$ there exists a constant $C(C_0) >0$ such that
$$ \left\| \Ar e^{-\lambda} \right\|_{L^\infty \left( \D_{\frac{\rho}{2}} \right) } \le \frac{C}{\rho} \left\| \Ar e^{-\lambda} \right\|_{L^2 \left( \D_\rho \right)}.$$
\end{cor}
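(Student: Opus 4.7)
The plan is a direct scaling reduction to Theorem \ref{mainth2}. I set $\bp_\rho(x):=\bp(\rho x)$ for $x\in\D$, which reparametrizes the image of $\bp$ by the conformal dilation $x\mapsto\rho x$. A routine computation from the definitions gives
\[
\lambda_{\bp_\rho}(x)=\lambda(\rho x)+\log\rho,\qquad \vec{n}_{\bp_\rho}(x)=\n(\rho x),\qquad \Ar_{\bp_\rho}(x)=\rho^{2}\Ar(\rho x),
\]
the last identity coming from the fact that the mean curvature is scale-invariant ($H_{\bp_\rho}(x)=H(\rho x)$) while $A_{\bp_\rho}(x)=\rho^2 A(\rho x)$. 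In particular, the pointwise conformal invariant obeys
\[
\bigl|\Ar_{\bp_\rho}e^{-\lambda_{\bp_\rho}}\bigr|(x)=\rho\,\bigl|\Ar e^{-\lambda}\bigr|(\rho x),
\]
which is where the extra $1/\rho$ in the conclusion will come from.

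Next I verify that the hypotheses of Theorem \ref{mainth2} transfer to $\bp_\rho$ on $\D$ with the same constants. The Dirichlet energy $\int|\nabla\vec{n}|^{2}$ is scale-invariant in two dimensions, so \eqref{210520201541bus} yields $\int_\D|\nabla\vec{n}_{\bp_\rho}|^{2}\le C_0$; likewise the Marcinkiewicz quasinorm $\|\nabla\lambda\|_{L^{2,\infty}}$ is scale-invariant in two dimensions (a direct computation on superlevel sets using $\nabla\lambda_{\bp_\rho}(x)=\rho\,\nabla\lambda(\rho x)$), so $\|\nabla\lambda_{\bp_\rho}\|_{L^{2,\infty}(\D)}\le C_0$. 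The conformal invariance of the area density $|\Ar e^{-\lambda}|^{2}\,dxdy$ gives $\|\Ar_{\bp_\rho}e^{-\lambda_{\bp_\rho}}\|_{L^{2}(\D)}=\|\Ar e^{-\lambda}\|_{L^{2}(\D_\rho)}\le\varepsilon_0$. Finally, $\bp_\rho$ remains a weak Willmore immersion in $\mathcal{E}(\D)$: the bilipschitz and $L^2$-second-fundamental-form conditions transfer verbatim, and the divergence-form equation \eqref{equationwillmorefaible} is preserved under the conformal change of parameters $x\mapsto\rho x$ since it is required in \emph{every} conformal chart and each $\nabla$ and $\mathrm{div}$ picks up the same power of $\rho$ in the change of variables.

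Applying Theorem \ref{mainth2} to $\bp_\rho$ produces
\[
\bigl\|\Ar_{\bp_\rho}e^{-\lambda_{\bp_\rho}}\bigr\|_{L^\infty(\D_{1/2})}\le C(C_0)\,\bigl\|\Ar_{\bp_\rho}e^{-\lambda_{\bp_\rho}}\bigr\|_{L^2(\D)}.
\]
Substituting the pointwise relation displayed above, the left-hand side equals $\rho\,\|\Ar e^{-\lambda}\|_{L^\infty(\D_{\rho/2})}$ and the right-hand side equals $C\,\|\Ar e^{-\lambda}\|_{L^2(\D_\rho)}$; dividing by $\rho$ gives the stated inequality with constant $C/\rho$. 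There is no substantive obstacle here: the argument is purely a bookkeeping exercise in how each object rescales under the dilation, and the only subtlety worth mentioning is the factor of $\rho$ picked up by the pointwise invariant $|\Ar e^{-\lambda}|$, which is precisely responsible for the $\rho^{-1}$ in the final estimate.
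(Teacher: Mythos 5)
Your proof is correct and is precisely the argument the paper intends: the corollary is stated immediately after the remark ``By making the change of variables $\bp_\rho = \bp(\rho\,\cdot)$ \ldots one can easily extend these results,'' and you have simply carried out that rescaling in detail, with the correct bookkeeping ($\lambda_{\bp_\rho}=\lambda(\rho\,\cdot)+\log\rho$, $\Ar_{\bp_\rho}=\rho^2\Ar(\rho\,\cdot)$, scale invariance of $\int|\nabla\n|^2$, of $\|\nabla\lambda\|_{L^{2,\infty}}$ and of $\|\Ar e^{-\lambda}\|_{L^2}$, and the factor $\rho$ on the pointwise invariant that produces the $\rho^{-1}$ in the conclusion). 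No gaps.
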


It should be noted that the hypothesis \eqref{210520201621} is needed for analytical reasons as a minimal starting control on the metric. We have chosen  \eqref{210520201621} for convenience, but much weaker hypotheses are possible. We will explain why in practice  \eqref{210520201621} suffices. In addition, estimates involving the half-disk $\D_{\rho/2}$ can be replaced by any proper subdisk of $\D$, with appropriate adjustments on the constants involved.\\

We will also bring forth two applications of these theorems. First, we will develop a translation of the $\varepsilon$-regularity into the conformal Gauss map framework to obtain a result for minimal surfaces into the de Sitter space $\s^{4,1}$.

\begin{theo}
\label{theoconfGauss}
Let $Y$ be the conformal Gauss map of a conformal weak Willmore immersion satisfying \eqref{Mest1} and \eqref{Mest2}. There exists $\varepsilon_0>0$ such that if $$\langle \nabla Y , \nabla Y \rangle_{4,1} \le \varepsilon_0,$$ then:
\begin{equation} \label{epsregY} \left\| \langle \nabla Y, \nabla Y \rangle_{4,1} \right\|^2_{L^\infty \left(\D_{\frac{1}{2}} \right)} \le C \int_\D \langle \nabla Y , \nabla Y \rangle_{4,1}.\end{equation}
\end{theo}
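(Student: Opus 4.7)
The plan is to recognize Theorem \ref{theoconfGauss} as a translation of Theorem \ref{mainth2} into the language of the conformal Gauss map, using Proposition \ref{230520200838} as a dictionary. Since $\bp$ is a conformal weak Willmore immersion and its conformal Gauss map $Y$ is conformal with conformal factor $|\Ar e^{-\lambda}|^2$, in a flat coordinate chart on $\D$ we have
$$\langle \partial_x Y,\partial_x Y\rangle_{4,1} \,=\, \langle \partial_y Y,\partial_y Y\rangle_{4,1} \,=\, \big|\Ar e^{-\lambda}\big|^2, \qquad \langle \partial_x Y,\partial_y Y\rangle_{4,1}=0,$$
so that pointwise $\langle \nabla Y,\nabla Y\rangle_{4,1}=2\,|\Ar e^{-\lambda}|^2$. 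This identity is the bridge between the two formulations.

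First I would translate the smallness hypothesis. Reading $\langle \nabla Y,\nabla Y\rangle_{4,1}\le \varepsilon_0$ as the integral smallness $\int_\D\langle \nabla Y,\nabla Y\rangle_{4,1}\le \varepsilon_0$, the identity above gives $\|\Ar e^{-\lambda}\|_{L^2(\D)}^2 \le \varepsilon_0/2$. Together with the standing assumptions \eqref{Mest1} and \eqref{Mest2}, this is precisely the setting of Theorem \ref{mainth2}: choosing $\varepsilon_0$ small enough (in terms of the constant $C_0$) so that $\sqrt{\varepsilon_0/2}$ is below the threshold of that theorem, we obtain a constant $C=C(C_0)$ with
$$\big\|\Ar e^{-\lambda}\big\|_{L^\infty(\D_{1/2})} \,\le\, C\,\big\|\Ar e^{-\lambda}\big\|_{L^2(\D)}.$$

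Next I would square and translate back. The pointwise identity yields $\|\langle \nabla Y,\nabla Y\rangle_{4,1}\|_{L^\infty(\D_{1/2})}=2\,\|\Ar e^{-\lambda}\|_{L^\infty(\D_{1/2})}^2$, while $\int_\D\langle \nabla Y,\nabla Y\rangle_{4,1}=2\,\|\Ar e^{-\lambda}\|_{L^2(\D)}^2$. Inserting the estimate above,
$$\big\|\langle \nabla Y,\nabla Y\rangle_{4,1}\big\|_{L^\infty(\D_{1/2})} \,\le\, 2C^2\,\big\|\Ar e^{-\lambda}\big\|_{L^2(\D)}^2 \,=\, C^2 \int_\D \langle \nabla Y,\nabla Y\rangle_{4,1}.$$
Squaring this and using the hypothesis $\int_\D\langle \nabla Y,\nabla Y\rangle_{4,1}\le \varepsilon_0$ on one of the two factors on the right produces \eqref{epsregY} with constant $C^4\varepsilon_0$.

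Since everything reduces to the dictionary provided by Proposition \ref{230520200838} plus the already-established Theorem \ref{mainth2}, there is no genuine analytic obstacle. The only delicate point is a cosmetic one: checking that the scaling of the hypothesis and the squared $L^\infty$ norm in \eqref{epsregY} are exactly compatible with the linear estimate coming from Theorem \ref{mainth2}, which is resolved by the smallness hypothesis absorbing the extra power. If one prefers to interpret the hypothesis as a pointwise bound, the same reasoning works after first noting that a pointwise bound on $\langle \nabla Y,\nabla Y\rangle_{4,1}$ trivially gives a bound on its integral, and then proceeding identically.
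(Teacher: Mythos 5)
Your proposal is correct and follows essentially the same route as the paper, which proves Theorem \ref{theoconfGauss} in one line as a reformulation of Theorem \ref{mainth2} via the identity of Proposition \ref{230520200838}; you merely spell out the bookkeeping (the factor of $2$ in the conformal-factor convention is immaterial up to constants). Your observation that the extra square on the left of \eqref{epsregY} is absorbed by the smallness hypothesis is a worthwhile clarification that the paper leaves implicit.
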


It is known that any minimal spacelike immersion in $\s^{4,1}$ is the conformal Gauss map of a Willmore immersion (see for instance theorem 3.9 of \cite{bibnmconfgaussmap}). But it is also well-known that minimal surfaces, and more generally harmonic maps, play a key role in Physics, see \cite{Jost} or \cite{Deligne} for instance.
For example, they are at the core of the AdS/CFT correspondence proposed by Maldacena in 1998 \cite{Ma}, and more recently in his work with Alday \cite{AMa}. In General Relativity, space-time is represented by a Lorentzian manifold \cite{Oneill} whose de Sitter space $\mathbb{S}^{4,1}$ is one important cosmological model. In this context, the following reformulation of  Theorem \ref{theoconfGauss} should find important applications, at least since $\varepsilon$-regularity is the first step of any asymptotic analysis of sequences of harmonic maps.

\begin{cor}
\label{corconfGauss}
Let $Y\, : \, \Sigma \rightarrow \s^{4,1}$ be  a conformal minimal spacelike immersion such that 
$$ \int_\Sigma \langle \nabla Y , \nabla Y \rangle_{4,1} < \infty.$$  Then:
\begin{itemize}
\item
It is the conformal Gauss map of a Willmore immersion $\bp$ of finite total curvature.
\item
$Y$ satisfies \eqref{epsregY} around every point.
\end{itemize}
\end{cor}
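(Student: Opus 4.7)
The strategy is to reduce both items to Theorem~\ref{theoconfGauss} applied to the Willmore immersion that $Y$ parametrizes.

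For the first item, theorem~3.9 of~\cite{bibnmconfgaussmap} states that every conformal minimal spacelike immersion $Y:\Sigma\to\s^{4,1}$ is the conformal Gauss map of some (weak) Willmore immersion $\bp:\Sigma\to\R^3$. By Proposition~\ref{230520200838}, in a conformal parametrization for $\bp$, $Y$ is itself conformal with conformal factor $|\Ar e^{-\lambda}|^2$, so
\[
\int_\Sigma \langle\nabla Y,\nabla Y\rangle_{4,1}
= 2\int_\Sigma|\Ar|_g^2\,d\mathrm{vol}_g = 2\,\Er(\bp).
\]
On the closed surface $\Sigma$, \eqref{courbsanstrace} then converts the finiteness of the left-hand side into $W(\bp)<\infty$, i.e.\ $\bp$ is of finite total (Willmore) curvature. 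Together with the decomposition $|A|_g^2=|\Ar|_g^2+2H^2$, this gives $\int_\Sigma|A|_g^2\,d\mathrm{vol}_g<\infty$, hence $\int_\Sigma|\nabla\vec n|_g^2\,d\mathrm{vol}_g<\infty$, so that $\bp\in\mathcal E(\Sigma)$.

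For the second item, I fix $p\in\Sigma$ and choose, via Theorem~\ref{localconformalcoordinates} and Theorem~\ref{theooncontrolelegraddufactconf}, a conformal chart $\Psi:\D\to U\ni p$ on which $\bp\circ\Psi\in\mathcal E(\D)$ and $\|\nabla\lambda\|_{L^{2,\infty}(\D)}\le C_0$, with $C_0$ determined by $\|\nabla\vec n\|^2_{L^2(\Sigma)}$. Reparametrizing via $x\mapsto\Psi(\rho x)$ for small $\rho>0$ leaves the scale-invariant quantities $\|\nabla\lambda\|_{L^{2,\infty}}$ and $\|\Ar e^{-\lambda}\|_{L^2}$ unchanged while shrinking the domain of integration. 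Since $\int_\Sigma|\nabla\vec n|^2<\infty$ and $\int_\Sigma|\Ar|_g^2\,d\mathrm{vol}_g<\infty$ (the latter from the first item), absolute continuity of these integrals allows us to choose $\rho$ small enough so that simultaneously $\int_\D|\nabla\vec n|^2\le C_0$ and
\[
\int_\D\langle\nabla Y,\nabla Y\rangle_{4,1}=2\int_\D|\Ar e^{-\lambda}|^2\,dx\,dy\le\varepsilon_0(C_0),
\]
where $\varepsilon_0(C_0)$ is the threshold from Theorem~\ref{theoconfGauss}. That theorem then applies in the rescaled chart and yields \eqref{epsregY} on its half-disk, which translates back to a pointwise bound for $\langle\nabla Y,\nabla Y\rangle_{4,1}$ in a genuine neighborhood of $p$.

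The main technical point is synchronizing the three requirements---the $L^{2,\infty}$-bound on $\nabla\lambda$, the $L^2$-smallness of $\nabla\vec n$, and the smallness of the $Y$-energy---in a single chart at $p$. The first is ensured \emph{globally} by Theorem~\ref{theooncontrolelegraddufactconf} via a uniformizing reference metric $h$, while the latter two shrink to zero as the chart is localized around $p$, using only the finiteness of the global integrals together with the identification $\int\langle\nabla Y,\nabla Y\rangle_{4,1}=2\,\Er(\bp)$ established in the first part. Once all three conditions hold simultaneously, the conclusion is immediate from Theorem~\ref{theoconfGauss}.
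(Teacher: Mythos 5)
Your proof is correct and follows essentially the same route as the paper's (very terse) argument: identify $\bp$ via theorem~3.9 of \cite{bibnmconfgaussmap}, use Proposition~\ref{230520200838} to equate the Lorentzian energy of $Y$ with the tracefree total curvature of $\bp$, pass to finite total curvature via Gauss--Bonnet, and then verify the hypotheses of Theorem~\ref{theoconfGauss} in suitably rescaled local conformal charts. Your write-up simply makes explicit the localization/rescaling step that the paper leaves implicit.
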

\begin{proof}
Since $Y$ has finite energy, the Willmore immersion $\bp$ has finite tracefree total curvature, and thus, owing to the Gauss Bonnet formula, has finite total curvature.  Hypotheses  \eqref{Mest1} and \eqref{Mest2} (thanks to theorem \ref{theooncontrolelegraddufactconf}) are then satisfied on any local disk, and one can apply theorem \ref{theoconfGauss}.
\end{proof}

This result is to be compared with previous $\varepsilon$-regularity results for harmonic surfaces into $\s^{4,1}$, in particular theorem 1.7 in \cite{MR3003285} by M. Zhu:
\begin{theo}
\label{theoZhu}
Any weakly harmonic map $u \in W^{1,2}( \D, \s^{4,1})$ is H\"older continuous. In particular there exists $\varepsilon_0>0$ such that if $\| \nabla u \|_{L^2 (\D)} \le \varepsilon_0$ then
\begin{equation}
\label{epsregZhu}
\| \nabla u \|_{L^{\infty} \left(\D_{\frac{1}{2} }  \right)} \le C  \| \nabla u \|_{L^{2} \left(\D \right)}. \end{equation}
\end{theo}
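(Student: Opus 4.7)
The plan is to cast the weakly harmonic map equation into $\s^{4,1}$ in T.~Rivière's antisymmetric-potential form $-\Delta u=\Omega\cdot\nabla u$ with $\Omega\in L^{2}(\D,\mathfrak{o}(4,1))$, and then adapt his Coulomb gauge plus Hardy/BMO duality bootstrap to the non-compact pseudo-orthogonal group $SO(4,1)$. Since $\s^{4,1}=\{v\in\R^{4,1}\,:\,\langle v,v\rangle_{4,1}=1\}$ and admissible variations are the Lorentz-tangent ones, varying the Dirichlet energy gives, in $\mathcal{D}'(\D)$,
\[
\Delta u+\langle\nabla u,\nabla u\rangle_{4,1}\,u=\vec{0}.
\]
Setting $\eta:=\mathrm{diag}(1,1,1,1,-1)$ and
\[
\Omega^{i}{}_{j}:=\eta_{jk}\bigl(u^{k}\,\nabla u^{i}-u^{i}\,\nabla u^{k}\bigr),
\]
the identity $\eta_{jk}u^{k}\nabla u^{j}=\tfrac{1}{2}\nabla\langle u,u\rangle_{4,1}=0$ rewrites the equation as $-\Delta u=\Omega\cdot\nabla u$, with $\Omega$ $\eta$-antisymmetric, hence $\mathfrak{o}(4,1)$-valued.

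Next I would set up a Coulomb gauge for $\Omega$. In the Riemannian case of \cite{bibanalysisaspects}, for $\|\Omega\|_{L^{2}}$ small enough one finds $P\in W^{1,2}(\D,SO(n))$ and $\xi\in W^{1,2}(\D,\mathfrak{o}(n))$ satisfying
\[
P^{-1}\nabla P+P^{-1}\Omega P=\nabla^{\perp}\xi,
\]
with uniform $L^{\infty}$ control on $P$. I would mimic this by a contraction in $W^{1,2}\cap L^{\infty}$ starting from $P_{0}=\mathrm{Id}$: each iterate comes from a linear div-curl solve in $\mathfrak{o}(4,1)$ composed with the pseudo-orthogonal exponential, and the smallness of $\|\nabla u\|_{L^{2}}$ forces every iterate to stay in a small $W^{1,2}$-neighbourhood of $\mathrm{Id}$, with Wente-type refinements supplying the $L^{\infty}$ control that the non-compactness of $SO(4,1)$ does not provide for free. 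The main obstacle lies here: because $\s^{4,1}$ is a non-compact hyperboloid (the fifth coordinate is unbounded), $u$ itself need not belong to $L^{\infty}(\D)$, so even $\Omega\in L^{2}$ is non-automatic. I would address this by first composing with an isometry of $\s^{4,1}$ normalising $u$ at a Lebesgue point, then invoking the small-energy hypothesis together with a John--Nirenberg argument to secure $u\in L^{\infty}(\D_{\frac{3}{4}})$, after which the gauge iteration closes.

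Once the Coulomb gauge is in place, substituting back into $-\Delta u=\Omega\cdot\nabla u$ recovers the divergence-form equation
\[
\mathrm{div}\bigl(P^{-1}\nabla u-\nabla^{\perp}\xi\cdot P^{-1}u\bigr)=0,
\]
of the classical Jacobian / div-curl type. The Coifman--Lions--Meyer--Semmes $\mathcal{H}^{1}$/BMO duality then places $\nabla u$ in $L^{2,1}_{\mathrm{loc}}$, hence in $L^{p}_{\mathrm{loc}}$ for every finite $p$. Reinjecting into $-\Delta u=\Omega\cdot\nabla u$ and iterating standard elliptic regularity yields $\nabla u\in L^{\infty}(\D_{\frac{1}{2}})$ together with the scaling-invariant bound
\[
\|\nabla u\|_{L^{\infty}(\D_{\frac{1}{2}})}\le C\,\|\nabla u\|_{L^{2}(\D)},
\]
from which Hölder continuity of $u$ is immediate.
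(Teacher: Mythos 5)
Your reduction of the harmonic map equation to the antisymmetric-potential form $-\Delta u=\Omega\cdot\nabla u$ with $\Omega$ $\eta$-antisymmetric is correct (up to a sign), but the two steps on which you then rely are precisely the ones that fail or remain unproved. First, the Coulomb gauge: Uhlenbeck's and Rivi\`ere's constructions of $P$ with $P^{-1}\nabla P+P^{-1}\Omega P=\nabla^{\perp}\xi$ use in an essential way that the structure group is \emph{compact}; this is what makes $\Vert P\Vert_{L^\infty}$ free of charge and what drives the continuity-method existence proof. For $SO(4,1)$ this bound is exactly what is missing, and your proposed fix (a contraction in $W^{1,2}\cap L^\infty$ near the identity ``with Wente-type refinements'') is an assertion, not an argument; extending the gauge to non-compact groups is the genuinely hard content of Zhu's work, not a routine adaptation. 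Second, your claim that small energy plus John--Nirenberg yields $u\in L^\infty(\D_{\frac{3}{4}})$ is false: $W^{1,2}(\D)$ embeds into $BMO$, and a small $BMO$ norm gives exponential integrability, not boundedness. Consequently the products $u^{k}\nabla u^{i}$ entering $\Omega$ are only in $L^{q}$ for $q<2$, and the hypothesis $\Omega\in L^{2}$ of the gauge lemma is not secured.

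The paper's proof sidesteps both difficulties by exploiting that $\s^{4,1}$ is a quadric, so that the equation is \emph{already} a conservation law: $\mathrm{div}(\nabla u\,u^{T}-u\,\nabla u^{T})=0$. Introducing a potential $B$ with $\nabla^{\perp}B=\nabla u\,u^{T}-u\,\nabla u^{T}$ and using $\langle u,\nabla u\rangle_{4,1}=0$ yields the closed Jacobian system $\Delta B=2\,\nabla u\,\nabla^{\perp}u^{T}$ and $\Delta u=\nabla^{\perp}B\,\nabla(\epsilon u)$, to which Wente/Coifman--Lions--Meyer--Semmes compensation applies directly under the smallness hypothesis; no gauge transformation and no $L^\infty$ bound on $u$ are needed at the outset. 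If you wish to keep your route, you must actually prove the non-compact gauge lemma; otherwise, observe that for quadric targets the div-curl structure is available without any gauge.
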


The key difference between \eqref{epsregY} and \eqref{epsregZhu} lies in the norms they involve. In the case of \eqref{epsregY}, we make use of the more geometrically meaningful Lorentz norm, while M. Zhu's result involves the Euclidean norm, more convenient analytically but less convenient geometrically.
Corollary \ref{corconfGauss} thus improves on  theorem 1.7 of \cite{MR3003285}. The offset is that  the harmonic hypotheses are no longer sufficient, and we need to further assume minimality, as well as \eqref{Mest1} and \eqref{Mest2} (indeed, corollary \ref{corconfGauss} is more geometric in nature than analytic).\\

Our second application is a new gap result, which can be seen as a companion to the gap results of G. Wheeler and J. McCoy (see theorem 1 in \cite{WM}, or \cite{MR3314084}). While in the latter, it is assumed that the immersion is proper, we instead assume our immersion has bounded energy. 

\begin{theo} 
    \label{gap}
    There exists $\varepsilon_0 >0$ such any complete Willmore surface $\Sigma$ satisfying 
    $$\int_\Sigma \vert A\vert^2 \, d\text{vol}_g <+\infty,$$
 and   $$\int_\Sigma \vert \mathring{A}\vert^2 \, d\text{vol}_g  <\varepsilon_0,$$
is totally umbilic, i.e. is either a plane or a round sphere.
\end{theo}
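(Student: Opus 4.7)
The plan is to show that $\mathring A\equiv 0$ on $\Sigma$; the conclusion then follows from the classical rigidity of complete totally umbilic surfaces in $\R^3$, which forces $\Sigma$ to be a plane or a round sphere. The strategy is to compactify $\Sigma$ and reduce to a gap statement for \emph{closed} Willmore surfaces, where the machinery developed in this paper can be applied globally.

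\textbf{Compactification.} The completeness of $\Sigma$ together with $\int_\Sigma|A|^2<\infty$ puts us in the framework of the Huber--M\"uller--\v Sver\'ak classification: $\Sigma$ is conformally equivalent to a closed Riemann surface $\overline\Sigma$ minus finitely many punctures $\{p_1,\ldots,p_k\}$, and each end is conformally parametrized by a punctured disk. The smallness $\int|\mathring A|^2<\varepsilon_0$ together with $\int|A|^2<\infty$ on each end makes the removable singularity theory for weak Willmore ends applicable (in the spirit of \cite{bibkuwschat} and subsequent developments), so that $\Phi$ extends to a weak Willmore immersion $\overline\Phi\in\mathcal E(\overline\Sigma)$ with $\mathring E(\overline\Phi)=\mathring E(\Phi)<\varepsilon_0$.

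\textbf{Closed-surface gap.} On the closed $\overline\Sigma$, the Li--Yau inequality $W\ge 4\pi$ combined with the identity \eqref{courbsanstrace} gives
\[
    8\pi\le 2W(\overline\Phi)=\mathring E(\overline\Phi)+4\pi\chi(\overline\Sigma)<\varepsilon_0+4\pi\chi(\overline\Sigma),
\]
which for $\varepsilon_0<4\pi$ forces $\chi(\overline\Sigma)=2$, so $\overline\Sigma\cong S^2$, and moreover $W(\overline\Phi)<4\pi+\varepsilon_0/2$. By the sharp rigidity of the round sphere as an isolated $W=4\pi$-minimizer among Willmore spheres (Simon), for $\varepsilon_0$ small enough $\overline\Phi$ parametrizes a round sphere. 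A more self-contained argument, in line with the present paper, applies Corollary~\ref{230520201048} on a finite atlas of $\overline\Sigma$ provided by Theorem~\ref{theooncontrolelegraddufactconf} to obtain the pointwise bound $|\mathring A|_g\le C\sqrt{\varepsilon_0}$ everywhere, and then uses the Willmore equation \eqref{willmoche} together with real-analyticity (or the conformal Gauss map formalism of Section~1.3 together with Theorem~\ref{theoconfGauss}, exploiting that $Y$ is a weakly minimal spacelike map of the sphere into $\s^{4,1}$ of arbitrarily small area) to propagate this smallness to the identical vanishing $\mathring A\equiv 0$. Returning to $\Sigma=\overline\Sigma\setminus\{p_1,\ldots,p_k\}$, the image is the full round sphere (if no puncture maps to infinity) or the round sphere minus a single point, which is a plane after stereographic projection; completeness of $\Sigma$ then closes the argument.

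\textbf{Main obstacle.} The heart of the proof is the compactification: justifying that $\Phi$ extends to a weak Willmore immersion of $\overline\Sigma$ requires a careful asymptotic analysis at each puncture to rule out $\mathring E$ concentration at the ends -- this is where the smallness of $\mathring E$ plays a crucial role, since otherwise one cannot exclude nontrivial bubbling. Once the compactification is obtained, the closed-surface gap in the second step is essentially a direct global application of Corollary~\ref{230520201048}.
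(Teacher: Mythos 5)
Your route differs fundamentally from the paper's, and it contains two genuine gaps. First, the compactification step as stated is not available: a complete Willmore surface with finite total curvature does not in general extend to a weak Willmore immersion of the compactified Riemann surface $\overline{\Sigma}$ --- the plane itself, which satisfies both hypotheses of the theorem, is the basic counterexample, since the immersion diverges at the puncture. To compactify one must first compose with an inversion, and the resulting surface is in general only a \emph{branched} Willmore surface whose equation may acquire a residue term at the images of the ends; ruling this out is exactly the removability analysis you defer to ``the spirit of'' the literature, and it does not follow from the smallness of $\int|\mathring{A}|^2$ alone. Second, even granting a closed Willmore sphere $\overline{\Phi}$ with $\mathring{E}(\overline{\Phi})<\varepsilon_0$, your passage from the pointwise bound $|\mathring{A}|_g\le C\sqrt{\varepsilon_0}$ to $\mathring{A}\equiv 0$ is unsubstantiated: real-analyticity does not upgrade smallness to vanishing, and Simon's rigidity concerns minimizers rather than arbitrary critical points. (Bryant's quantization $W\in 4\pi\mathbb{N}$ for Willmore spheres, combined with your Li--Yau computation giving $W<8\pi$, would close that particular step, but you do not invoke it.)

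The paper avoids both issues by never extending the immersion. It uses Huber's theorem only to identify the conformal type, then the generalized Gauss--Bonnet formula with end multiplicities together with the identity $|A|_g^2=2|\mathring{A}|_g^2+2K$ to force $\hat{\Sigma}$ to be a sphere with at most one end (the torus with no end being excluded by Willmore's bound $W\ge 4\pi$). This produces a single global conformal chart $\bp:\R^2\to\R^3$, and the decisive quantitative input is the scaling factor $C/\rho$ in Corollary~\ref{coro}: applying it on disks $\D_\rho$ and letting $\rho\to\infty$, the uniformly bounded $L^2$ norm of $\mathring{A}e^{-\lambda}$ forces $|\mathring{A}|(p)=0$ at every point. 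This is the mechanism that converts ``small'' into ``zero''; it has no analogue on a closed surface, and it is precisely what your plan is missing.
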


\medskip

{\bf Acknowledgments: } Parts of this work were completed while the second and third authors were guests at Monash University under the {\it Robert Bartnik Fellowship} funding program. 
This work was also partially supported by the ANR BLADE-JC ANR- 18-CE40-002.


\section{Proof of the Main Theorems}

The proof will proceed in four broad steps:
\begin{itemize}
\item
First, we will use the Gauss-Codazzi formula to deduce a control on how the mean curvature differs from its average.
\item
We will then apply the same procedure to obtain an analogous control of the conformal Gauss map. 
\item
Using suitable conformal transformations, we will show that the average of the mean curvature can be set to zero.
\item
Finally, we will show that under a small energy condition, the average of the mean curvature can be cancelled, which ultimately yields the desired estimates.
\end{itemize}

\subsection{Controlling \texorpdfstring{$H$}{TEXT}}
\begin{prop}
\label{110320201432}
Let $\bp \in \mathcal{E}\left(\D \right)$  be a conformal weak Willmore immersion. 
Assume 
\begin{equation}
\label{ebound}
\int_{\D} \left| \nabla \n \right|^2 \le \frac{4\pi}{3},
\end{equation}
and
\begin{equation}
\label{lbound}
\left\| \nabla \lambda \right\|_{L^{2,\infty} \left( \D \right) } \le C_0,
\end{equation}
for some constant $C_0>0$. \\
There exists $C>0$ depending only on $C_0$ such that for any $U \in W^{1,2}_0\cap L^\infty \left( \D_{\frac{1}{2}} , \R^2 \right)$, we have
\begin{equation} \label{Hest}
\int_{\D_{\frac{1}{2}}} \langle \nabla H ,U\rangle e^{2\lambda} dxdy \;\leq\; C \left\|\Ar e^{-\lambda} \right\|_{L^2 \left( \D_{\frac{1}{2}} \right)}  \left\| \nabla U e^{\lambda}\right\|_{L^2 \left(\D_{\frac{1}{2}} \right)}\:.
\end{equation}
\end{prop}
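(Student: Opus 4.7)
The plan is to derive the Codazzi--Mainardi identity in the conformal chart, namely
$$e^{2\lambda}\,\partial_i H \,=\, \sum_{j=1}^{2} \partial_j \Ar_{ij}, \qquad i=1,2,$$
(equivalently, in complex form, $\bar\partial h = e^{2\lambda}\partial H$ for the Hopf function $h = \Ar_{11} - i\Ar_{12}$), and then conclude by an integration by parts and a weighted Cauchy--Schwarz. This is precisely the ``Gauss--Codazzi'' input announced in the outline of \S2.

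First I would derive the displayed identity. Writing $g=e^{2\lambda}\delta$ and decomposing $A_{ij} = He^{2\lambda}\delta_{ij} + \Ar_{ij}$, a short computation substituting the conformal Christoffels
$$\Gamma^k_{ij} \,=\, \delta_{ik}\partial_j\lambda + \delta_{jk}\partial_i\lambda - \delta_{ij}\partial_k\lambda$$
into the tensorial Codazzi equation $\nabla_i A_{jk} = \nabla_j A_{ik}$ and contracting on $(j,k)$ reveals that the would-be $H\nabla\lambda$ contributions cancel exactly, leaving the clean flat-divergence identity $e^{2\lambda}\nabla H = \mathrm{div}_{\mathrm{flat}}\Ar$ on the nose.

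Given $U \in W^{1,2}_0\cap L^\infty(\D_{1/2},\R^2)$, I would then multiply by $U_i$, sum over $i$, and integrate by parts---no boundary term arises since $U$ vanishes on $\partial \D_{1/2}$:
$$\int_{\D_{1/2}}\langle \nabla H, U\rangle\, e^{2\lambda}\,dxdy \,=\, -\int_{\D_{1/2}}\Ar : \nabla U\,dxdy,$$
where the symmetry of $\Ar$ turns the matrix pairing into the usual Frobenius inner product. Writing the integrand as $(e^{-\lambda}\Ar):(e^{\lambda}\nabla U)$ and applying Cauchy--Schwarz immediately yields \eqref{Hest}, in fact with constant $1$.

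The genuine analytic point, and the sole reason the hypotheses \eqref{ebound} and \eqref{lbound} intervene, is the justification of Codazzi in T.~Rivi\`ere's weak framework, where $H\in L^2$ only, so that $\nabla H$ is a priori merely a distribution. The energy bound \eqref{ebound} sits strictly below H\'elein's threshold $8\pi/3$ and supplies a controlled Coulomb moving frame on $\D$; combined with the $L^{2,\infty}$ control \eqref{lbound} on $\nabla\lambda$, one has enough regularity to approximate $\bp$ by smooth conformal immersions, establish Codazzi at the smooth level, and pass to the limit in the tested identity to recover $e^{2\lambda}\nabla H = \mathrm{div}_{\mathrm{flat}}\Ar$ distributionally. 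The main obstacle is therefore this weak justification; once it is in place, the integration by parts and Cauchy--Schwarz close the estimate without further difficulty, which is why the dependence of $C$ on $C_0$ is only implicit, arising through the approximation argument rather than through the inequality itself.
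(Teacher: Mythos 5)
Your proposal follows essentially the same route as the paper: the Codazzi equation written as the flat divergence identity $e^{2\lambda}\nabla H=\mathrm{div}\,\Ar$ (which, unwound in the components $l,m,n$ of $A$, is exactly the paper's formula \eqref{Gausscodazziformereelle}), followed by integration by parts against $U\in W^{1,2}_0\cap L^\infty$ and Cauchy--Schwarz. Your pointwise splitting $\Ar:\nabla U=(e^{-\lambda}\Ar):(e^{\lambda}\nabla U)$ is a small tidy improvement: it yields \eqref{Hest} with constant $1$ and dispenses, at this step, with the Harnack-type control $e^{\bar\lambda}/C\le e^{\lambda}\le Ce^{\bar\lambda}$ that the paper imports from Lemma II.1 of \cite{BernardRiviereSubcritical} to trade the unweighted norms $\Vert\Ar\Vert_{L^2}\Vert\nabla U\Vert_{L^2}$ for the weighted ones.
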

\begin{proof}
Using Lemma II.1 in \cite{BernardRiviereSubcritical}, we know that
\begin{equation}\label{BR}
\Vert\nabla\lambda\Vert_{L^2(\D_{\frac{1}{2}})}+\left|\lambda-\dfrac{1}{|\D_{\frac{1}{2}}|}\int_{\D_{\frac{1}{2}}}\lambda(x,y)\,dxdy \right|\;\leq\;C(\Vert\nabla\lambda\Vert_{L^{2,\infty}(\D)},\Vert\nabla\vec{n}\Vert_{L^2(\D)})\:.
\end{equation}
In particular, there exists another constant $ C >0$ depending only on $C_0$, such that on $\D_{\frac{1}{2}}$ one has
\begin{equation}
\label{elbound}
\frac{e^{\bar{\lambda}}}{C} \le e^{\lambda} \le C e^{\bar{\lambda}} \hbox{ on } \D_{\frac{1}{2}}, 
\end{equation}
where $\displaystyle \bar{\lambda}= \frac{1}{|\D_{\frac{1}{2}}|}\int_{\D_{\frac{1}{2}}}\lambda\,dxdy$.\\
Next, given $U=(U_1,U_2) \in\R^2\otimes W_0^{1,2}( \D_{\frac{1}{2}})$, we have
\begin{equation}
\label{050320201036}
\int_{\D_{\frac{1}{2}}} \langle \nabla H, U\rangle e^{2\lambda} dxdy = \int_{\D_{\frac{1}{2}}}  \left( H_x U_1 + H_y U_2 \right)e^{2\lambda}dx dy. \\
\end{equation}
Let us now recall the Gauss-Codazzi formula (see \eqref{Gausscodazziformereelle}) below
\begin{equation}
\label{05032020}
\left\{
\begin{aligned}
  e^{2\lambda}H_x &= \left(\frac{l-n}{2}\right)_x+m_y \\
  e^{2\lambda}H_y &= -\left(\frac{l-n}{2}\right)_y+m_x
\end{aligned}
\right.
\end{equation}
where $A =  \begin{pmatrix} l & m \\ m & n \end{pmatrix}$.

Injecting \eqref{05032020} into \eqref {050320201036}, one finds
\begin{equation}
\label{050320201047}
\begin{aligned}
\int_{\D_{\frac{1}{2}}} \langle \nabla H, U\rangle e^{2\lambda}  dxdy &= \int_{\D_{\frac{1}{2}}} \left( \frac{l-n}{2}\right)_x U_1 +m_y U_1 - \left( \frac{l-n}{2}\right)_y U_2 + m_x U_2 \\
&= \int_{\D_{\frac{1}{2}}}  \nabla \left( \frac{l-n}{2}\right)\cdot \begin{pmatrix} U_1 \\ -U_2 \end{pmatrix} + \nabla m \cdot  \begin{pmatrix} U_2 \\ U_1 \end{pmatrix} \\
&= - \int_{\D_{\frac{1}{2}}} \left( \frac{l-n}{2}\right) \mathrm{div} \begin{pmatrix} U_1 \\ -U_2 \end{pmatrix} + m \mathrm{div}  \begin{pmatrix} U_2 \\ U_1 \end{pmatrix},
\end{aligned}
\end{equation}
since $U$ vanishes on  $\partial \D_{\frac{1}{2}}$.
We can then deduce: 
\begin{equation}
\label{050320201109}
\begin{aligned}
\left| \int_{\D_{\frac{1}{2}}} \langle \nabla H, U\rangle e^{2\lambda} dxdy \right| &\le C \left( \left\| \frac{l-n}{2} \right\|_{L^2 \left( \D_{\frac{1}{2}} \right)} +  \left\| m \right\|_{L^2 \left(\D_{\frac{1}{2}}\right)} \right) \left\| \nabla U\right\|_{L^2 \left( \D_{\frac{1}{2}}\right)} \\
&\le C \left\|\Ar  \right\|_{L^2 \left( \D_{\frac{1}{2}} \right)}  \left\| \nabla U \right\|_{L^2 \left(\D_{\frac{1}{2}} \right)}\:. \\
\end{aligned}
\end{equation}
The conclusion follows from (\ref{elbound}).
\end{proof}

We will use in a decisive way the following result from J. Bourgain and H. Brezis (Theorem 3' in \cite{BB}):
\begin{theo*}
Let $\Omega\subset\mathbb{R}^2$ be a bounded set with Lipschitz boundary. If $f\in L^2(\Omega)$ has null average on $\Omega$, then there exists $V\in \R^2\otimes(W_0^{1,2}\cap L^\infty)(\Omega)$ such that
$$
\mathrm{div}\,V=\;f\:.
$$
Moreover
\begin{equation}\label{BB0}
\Vert V\Vert_{L^\infty(\Omega)}+\Vert\nabla V\Vert_{L^2(\Omega)}\;\leq\;C(\Omega)\Vert f\Vert_{L^{2}(\Omega)}\:.
\end{equation}
\end{theo*}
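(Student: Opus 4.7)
This is the celebrated $L^\infty$-divergence theorem of Bourgain and Brezis, whose proof is notably subtle because the naive Poisson-equation solution does not yield an $L^\infty$ vector field in dimension two. The plan follows their broad strategy in three stages: reduce to the whole plane, construct a candidate meeting both norms simultaneously, and restore the vanishing boundary condition.

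First I would extend $f$ from $\Omega$ to $\R^2$ by zero, correcting the average to remain null by subtracting a smooth compactly supported bump supported in a collar outside $\Omega$ (which is permitted because $\partial\Omega$ is Lipschitz), with $L^2$-norm controlled by $\|f\|_{L^2(\Omega)}$. The first naive candidate is $V_0 = \nabla\phi$ with $\Delta\phi = f$: standard elliptic regularity gives $\|\nabla V_0\|_{L^2} \lesssim \|f\|_{L^2}$, but $V_0$ only lies in BMO. Indeed, the embedding $W^{2,2}(\R^2) \hookrightarrow W^{1,\infty}(\R^2)$ fails marginally (logarithmically), and this failure is precisely the main obstacle.

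The Bourgain-Brezis insight, which I would reproduce, is to add a divergence-free correction $W$ to $V_0$, exploiting the fact that $\mathrm{div}$ has a very large kernel. Using a Littlewood-Paley decomposition $f = \sum_j P_j f$, Bernstein's inequality yields at each frequency block $\|\nabla\Delta^{-1}P_j f\|_{L^\infty} \lesssim \|P_j f\|_{L^2}$, and the defect is that $(\|P_j f\|_{L^2})_j$ sits in $\ell^2$ but not in $\ell^1$. The corrector $W$ is constructed by combining divergence-free modifications at successive scales so that the $L^\infty$ contributions telescope to an $\|f\|_{L^2}$-bound while the $L^2$-gradient estimates are preserved. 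This delicate scale-by-scale redistribution, which has no classical counterpart, is the heart of the argument and the reason the theorem required a dedicated construction.

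Finally, I would restrict $V_0 + W$ back to $\Omega$ and enforce the vanishing boundary condition by multiplying by a smooth cutoff $\chi$ equal to one on a slightly smaller set, absorbing the commutator error $V\nabla\chi$ in $L^2$ via the $L^\infty$-bound on $V$, and then solving a residual small-divergence equation (by Poisson with vanishing boundary data) to recover the exact identity $\mathrm{div}\,V = f$ with the estimate \eqref{BB0} intact.
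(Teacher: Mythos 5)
The paper does not actually prove this statement: it is imported verbatim as Theorem 3$'$ of Bourgain and Brezis \cite{BB} and used as a black box, so your proposal can only be measured against the genuine Bourgain--Brezis argument. As it stands it is a plan rather than a proof. You correctly isolate the obstruction (the naive candidate $\nabla\Delta^{-1}f$ lies in $W^{1,2}$ and in BMO but not in $L^\infty$, since $W^{2,2}(\R^2)\not\hookrightarrow W^{1,\infty}$) and the right general idea (exploit the large kernel of $\mathrm{div}$ by adding a divergence-free corrector). But the entire content of the theorem is the construction of that corrector, and your description of it --- ``combining divergence-free modifications at successive scales so that the $L^\infty$ contributions telescope'' --- is an assertion, not an argument; no candidate $W$ is exhibited and neither of the two competing estimates is verified. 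It also misrepresents the known proof: Bourgain and Brezis do not produce $V$ by a direct telescoping sum, but via a nonconstructive functional-analytic closing lemma (an open-mapping/iteration argument showing that approximate solvability with uniform bounds implies exact solvability), applied to approximate solutions built block-by-block from a Littlewood--Paley decomposition of $f$. Indeed, it is known that no bounded \emph{linear} solution operator exists, which rules out any construction as simple as the one you gesture at. The heart of the matter is therefore missing.

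The final step is also flawed as written. If $\chi$ is a cutoff equal to $1$ only on a proper subset of $\Omega$, then $\mathrm{div}(\chi V)=\chi f+V\cdot\nabla\chi$, so the residual you must still account for contains $(1-\chi)f$, which is merely an $L^2$ function; solving a Poisson problem with such a right-hand side returns a gradient in BMO but not in $L^\infty$, i.e.\ you collide with exactly the obstruction the theorem is designed to overcome. Passing from the free problem to the Dirichlet version $V\in \R^2\otimes(W_0^{1,2}\cap L^\infty)(\Omega)$ on a Lipschitz domain requires a genuine localization argument (this is precisely the gap between Theorems 3 and 3$'$ in \cite{BB}), not a cutoff followed by a Poisson solve. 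Since the present paper only cites the result, the appropriate course here is to do the same; a self-contained proof would require supplying the approximate-solution construction and the abstract surjectivity lemma.
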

Let 
$$
\overline{H}\;:=\;\frac{1}{\left|\D_{\frac{1}{2}}\right|}\int_{\D_{\frac{1}{2}}}H\,dxdy\: .
$$
Applying the above theorem and (\ref{elbound}), we know that there exists $V\in \R^2\otimes(W_0^{1,2}\cap L^\infty)(\D_{\frac{1}{2}})$ such that
$$
\mathrm{div}\,V\;=\;H-\overline{H} \qquad\text{on}\:\:\D_{\frac{1}{2}}
$$
and
\begin{equation}\label{BB}
\Vert V\Vert_{L^\infty(\D_{\frac{1}{2}})}+\Vert\nabla V\Vert_{L^2(\D_{\frac{1}{2}})}\;\leq\;Ce^{-\bar{\lambda}}\Vert (H-\overline{H}) e^{\lambda} \Vert_{L^{2}(\D_{\frac{1}{2}})}\:.
\end{equation}

Now replacing $H$ by $H-\overline{H}$ in \eqref{050320201109}, setting $U=e^{-2\lambda}V$, and integrating by parts yields
\begin{eqnarray}\label{crux}
 \int_{\D_{\frac{1}{2}}} |H-\overline{H}|^2 dxdy&=&\left| \int_{\D_{\frac{1}{2}}} \langle H-\overline{H}, \mathrm{div}(V)\rangle dxdy \right|\;\;=\;\;\left| \int_{\D_{\frac{1}{2}}} \langle \nabla H, U\rangle e^{2\lambda}\,dxdy \right|\nonumber\\
&\leq& C \left\|\Ar e^{-\lambda} \right\|_{L^2 \left( \D_{\frac{1}{2}} \right)}  \left\| e^{\lambda}\nabla U \right\|_{L^2 \left(\D_{\frac{1}{2}} \right)}\:.
\end{eqnarray}
But by (\ref{BR}), (\ref{elbound}) and (\ref{BB}), we have
\begin{eqnarray*}
\left\| e^{\lambda}\nabla U\right\|_{L^2 \left(\D_{\frac{1}{2}}\right)} & \leq&\; e^{-\bar{\lambda}}\left(\Vert \nabla V\Vert_{L^2\left(\D_{\frac{1}{2}}\right)}+\Vert\nabla\lambda\Vert_{L^2\left(\D_{\frac{1}{2}}\right)}\Vert V\Vert_{L^\infty\left(\D_{\frac{1}{2}}\right)}\right)\\
&\leq&\;C e^{-2\bar{\lambda}} \Vert (H-\overline{H}) e^{\lambda} \Vert_{L^{2}(\D_{\frac{1}{2}})},
\end{eqnarray*}
Introducing this into (\ref{crux}) and applying once more \eqref{elbound} yields

\begin{cor}
    \label{corf}
    Let $\bp \in \mathcal{E}\left(\D \right)$  be a conformal weak Willmore immersion. 
    Assume 
    \begin{equation}
    \label{ebound}
    \int_{\D} \left| \nabla \n \right|^2 \le \frac{4\pi}{3},
    \end{equation}
    and
    \begin{equation}
    \label{lbound}
    \left\| \nabla \lambda \right\|_{L^{2,\infty} \left( \D \right) } \le C_0,
    \end{equation}
for some constant $C_0>0$. \\
    There exists $C>0$ depending only on $C_0$ such that 
    \begin{equation} \label{Hest}
    \Vert (H -\overline{H}) e^{\lambda}\Vert_{L^2 \left( \D_{\frac{1}{2}} \right)} \;\leq\; C \left\|\Ar e^{-\lambda} \right\|_{L^2 \left( \D_{\frac{1}{2}} \right)}\:.
    \end{equation}
\end{cor}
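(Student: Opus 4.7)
The plan is to combine the proposition established just above (which controls the weighted integral $\int \langle \nabla H, U\rangle e^{2\lambda}$ against $\|\nabla U e^\lambda\|_{L^2}$) with the Bourgain–Brezis theorem cited immediately afterwards in order to produce a good test vector field. Since $H-\overline{H}$ has zero average on $\D_{1/2}$ by construction, the Bourgain–Brezis theorem applies (viewing $H-\overline{H}$ as an $L^2$ function, which is legitimate because $H\in L^2$ thanks to the finite second fundamental form hypothesis) and yields $V \in \R^2\otimes(W_0^{1,2}\cap L^\infty)(\D_{1/2})$ with $\mathrm{div}\,V=H-\overline{H}$ and the estimate \eqref{BB}. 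The natural choice is then $U:=e^{-2\lambda}V$, which vanishes on $\partial\D_{1/2}$ and lies in $W_0^{1,2}\cap L^\infty$ thanks to \eqref{elbound}.

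Next I would compute, using integration by parts (permissible since $V$ vanishes on $\partial\D_{1/2}$),
\[
\int_{\D_{1/2}}|H-\overline{H}|^2\,dxdy \;=\; \int_{\D_{1/2}}(H-\overline{H})\,\mathrm{div}\,V\,dxdy \;=\; -\int_{\D_{1/2}}\langle \nabla H,V\rangle\,dxdy \;=\; -\int_{\D_{1/2}}\langle \nabla H, U\rangle e^{2\lambda}\,dxdy,
\]
and then invoke the proposition to bound the last integral by $C\,\|\Ar e^{-\lambda}\|_{L^2(\D_{1/2})}\,\|\nabla U\,e^{\lambda}\|_{L^2(\D_{1/2})}$. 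The main task is then to control $\|\nabla U\,e^\lambda\|_{L^2}$: writing $\nabla U = e^{-2\lambda}(\nabla V - 2V\otimes\nabla\lambda)$, the product rule yields the pointwise bound $e^\lambda|\nabla U| \le e^{-\lambda}(|\nabla V|+2|V|\,|\nabla\lambda|)$. Using \eqref{elbound} to convert $e^{-\lambda}$ into $e^{-\bar\lambda}$, Cauchy–Schwarz to handle the $|V||\nabla\lambda|$ term via $\|V\|_{L^\infty}\|\nabla\lambda\|_{L^2(\D_{1/2})}$, and \eqref{BR} to control $\|\nabla\lambda\|_{L^2}$, I would obtain $\|\nabla U\,e^\lambda\|_{L^2}\le Ce^{-\bar\lambda}(\|\nabla V\|_{L^2}+\|V\|_{L^\infty})$, which by \eqref{BB} is bounded by $Ce^{-2\bar\lambda}\|(H-\overline{H})e^\lambda\|_{L^2}$.

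Finally, I would close the inequality by noting that \eqref{elbound} also gives $|H-\overline{H}|^2 \ge C^{-1}e^{-2\bar\lambda}|(H-\overline{H})e^\lambda|^2$, so the left-hand side dominates $C^{-1}e^{-2\bar\lambda}\|(H-\overline{H})e^\lambda\|_{L^2}^2$. Dividing by $\|(H-\overline{H})e^\lambda\|_{L^2}$ and cancelling the factors of $e^{-2\bar\lambda}$ on both sides yields the claimed inequality \eqref{Hest}. I expect the only mildly delicate step to be the bookkeeping of the conformal-factor powers: the point is that the proposition is formulated with the geometrically correct weighted norms $\|\Ar e^{-\lambda}\|_{L^2}$ and $\|\nabla U\,e^\lambda\|_{L^2}$, and one must verify that the gradient weights produced by the chain rule on $U=e^{-2\lambda}V$ are absorbed, for which \eqref{BR} and \eqref{elbound} are exactly the tools needed. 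No additional geometric input is required; the corollary is essentially a duality reformulation of the proposition via Bourgain–Brezis.
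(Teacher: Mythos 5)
Your proposal is correct and follows essentially the same route as the paper: apply Bourgain--Brezis to $H-\overline{H}$ to produce $V$, test the estimate of Proposition \ref{110320201432} with $U=e^{-2\lambda}V$, and absorb the conformal-factor weights using \eqref{elbound}, \eqref{BR} and \eqref{BB} before cancelling one power of $\Vert (H-\overline{H})e^{\lambda}\Vert_{L^2}$. The bookkeeping of the $e^{-\bar\lambda}$ powers you describe matches the paper's computation exactly.
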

    
\subsection{Controlling \texorpdfstring{$Y$}{TEXT}}
In this subsection we will abundantly use the conformal Gauss map $Y$, and  assume \eqref{lbound}  and \eqref{elbound} hold on the whole of $\D$, for notational convenience. In addition, up to a translation, we may assume that $\bp(0)={0}$ and up to a dilation that $\bar{\lambda} =0$. Hence, 
\begin{equation}
\label{elbound2}
\exists\:\: C(C_0) \text{ s.t. } \frac{1}{C} \le e^\lambda \le C,
\end{equation}
and
\begin{equation}
\label{Phiinf}
\left\| \bp \right\|_{W^{1,\infty} \left( \D \right)} \le C.
\end{equation}
\begin{prop}
\label{120320200910}
Let $\bp \in \mathcal{E}\left(\D \right)$  be a conformal weak Willmore immersion.
Assume \eqref{elbound2} and \eqref{Phiinf} hold. \newline
Then, there exists $C >0$ depending only on $C_0$ such that:
\begin{equation} \label{120320200914} 
\left\| (Y- \overline{Y})e^{\lambda} \right\|_{L^2 \left( \D \right) } \le C \left\| \Ar e^{-\lambda}\right\|_{L^2 \left( \D \right)} .\end{equation}
\end{prop}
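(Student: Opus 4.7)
The strategy mirrors the derivation of Corollary \ref{corf} for $H$, but now applied component-wise to the conformal Gauss map $Y$. Since by \eqref{elbound2} the conformal factor $e^\lambda$ is bounded above and below by constants on $\D$, the weighted norm $\|(Y-\overline{Y})e^\lambda\|_{L^2(\D)}$ is equivalent to the unweighted norm $\|Y-\overline{Y}\|_{L^2(\D)}$, where $\overline{Y}:=\frac{1}{|\D|}\int_\D Y\,dxdy$ denotes the Lebesgue average. It thus suffices to control the latter. For each of the five scalar components $Y^a-\overline{Y^a}$, which has zero Lebesgue average on $\D$, the theorem of Bourgain--Brezis yields a vector field $V^a\in W^{1,2}_0\cap L^\infty(\D,\R^2)$ with $\mathrm{div}\, V^a=Y^a-\overline{Y^a}$ and $\|V^a\|_\infty+\|\nabla V^a\|_{L^2}\le C\|Y^a-\overline{Y^a}\|_{L^2}$.

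Integrating by parts gives
\begin{equation*}
\int_\D |Y^a-\overline{Y^a}|^2\,dxdy \;=\; -\int_\D \nabla Y^a\cdot V^a\,dxdy\:.
\end{equation*}
Using the explicit formula \eqref{230520200910}, we write $\nabla Y^a = \nabla H\,\vec{V}_0^a - e^{-2\lambda}\Ar\cdot \nabla \vec{V}_0^a$ where $\vec{V}_0^a$ is the $a$-th scalar component of $(\bp,(|\bp|^2-1)/2,(|\bp|^2+1)/2)^T$. Thanks to \eqref{Phiinf}, both $\vec{V}_0^a$ and its gradient are uniformly bounded on $\D$. Splitting the right-hand side accordingly, one term involves $\Ar$ directly and the other involves $\nabla H$.

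The first term is estimated by brute force: since $e^{-2\lambda}|\nabla\vec{V}_0^a|$ is bounded by a constant depending only on $C_0$, one has $|\int_\D e^{-2\lambda}\Ar\cdot\nabla\vec{V}_0^a\cdot V^a\,dxdy|\le C\|\Ar\|_{L^2}\|V^a\|_{L^2}\le C\|\Ar e^{-\lambda}\|_{L^2}\|Y^a-\overline{Y^a}\|_{L^2}$ using \eqref{elbound2}. For the $\nabla H$ term, we set $U^a:=e^{-2\lambda}\vec{V}_0^a\,V^a$; by \eqref{elbound2}, \eqref{Phiinf}, and the Bourgain--Brezis bounds on $V^a$, this test field lies in $W^{1,2}_0\cap L^\infty(\D,\R^2)$. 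Then
\begin{equation*}
\int_\D \nabla H\cdot \vec{V}_0^a\,V^a\,dxdy \;=\; \int_\D \langle \nabla H, U^a\rangle e^{2\lambda}\,dxdy\:,
\end{equation*}
so Proposition \ref{110320201432} applies and yields a bound by $C\|\Ar e^{-\lambda}\|_{L^2}\|e^\lambda \nabla U^a\|_{L^2}$. The product rule gives $|e^\lambda\nabla U^a|\le C(|\nabla\lambda||V^a|+|V^a|+|\nabla V^a|)$; the first summand is controlled by $\|\nabla\lambda\|_{L^2(\D)}\|V^a\|_\infty$ (with the $L^2$-norm of $\nabla\lambda$ coming from \eqref{BR}), while the remaining terms are directly handled by the Bourgain--Brezis estimates.

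Combining the two contributions gives $|\int_\D \nabla Y^a\cdot V^a\,dxdy|\le C\|\Ar e^{-\lambda}\|_{L^2}\|Y^a-\overline{Y^a}\|_{L^2}$, and dividing through by $\|Y^a-\overline{Y^a}\|_{L^2}$ and summing over $a$ yields the sought estimate \eqref{120320200914}. The main technical obstacle is the bookkeeping of the derivatives of $U^a$: one must absorb the factor $e^{-2\lambda}$ into the test function to fit the statement of Proposition \ref{110320201432}, and verify that the ensuing product-rule terms are controlled thanks to the $L^2$-bound on $\nabla\lambda$ furnished by the Bernard--Rivière lemma and to the $L^\infty$-bound on $V^a$ specific to Bourgain--Brezis (as opposed to the weaker $L^2$-bound given by usual Hodge decomposition).
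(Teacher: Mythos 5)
Your proposal is correct and follows essentially the same route as the paper: Bourgain--Brezis applied to $Y-\overline{Y}$, integration by parts, the structure formula \eqref{230520200910} for $\nabla Y$, and Proposition \ref{110320201432} applied to the test field $e^{-2\lambda}\langle(\bp,\tfrac{|\bp|^2-1}{2},\tfrac{|\bp|^2+1}{2}),V\rangle$, with the product-rule terms absorbed via \eqref{BR}, \eqref{elbound2} and \eqref{Phiinf}. The only difference is that you carry out the bookkeeping component by component while the paper works vectorially with $U=e^{-2\lambda}V\in\R^5\otimes\R^2$, which is immaterial.
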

\begin{proof}
Let $U \in W^{1,2}_0\cap L^\infty \left( \D , \R^5 \otimes \R^2 \right)$. Then, owing to \eqref{230520200910}, we have
$$
\begin{aligned}
\left| \int_{\D}  \langle \nabla Y,U\rangle e^{2\lambda}\,dxdy \right| &\le \left| \int_{\D}  \left\langle \nabla H ,\left\langle \begin{pmatrix} \Phi \\ \frac{ | \Phi|^2 -1}{2} \\ \frac{ |\Phi|^2 + 1}{2} \end{pmatrix}, U \right\rangle\right\rangle e^{2\lambda}\,dxdy \right| +C \left\| \Ar e^{-\lambda} \right\|_{L^2 \left(\D \right) } \left\| U \right\|_{L^2\left( \D \right)} \\
&\le C \left\| \Ar e^{-\lambda} \right\|_{L^2\left( \D \right)} \left( \left\| e^{\lambda} \nabla \left( \left\langle \begin{pmatrix} \Phi \\ \frac{ | \Phi|^2 -1}{2} \\ \frac{ |\Phi|^2 + 1}{2} \end{pmatrix}, U \right\rangle \right) \right\|_{L^2 \left( \D \right)} +  \left\| U \right\|_{L^2\left( \D \right)} \right) \\
& \le C \left\| \Ar e^{-\lambda} \right\|_{L^2\left( \D \right)} \left\| U \right\|_{W^{1,2} \left(\D \right) },
\end{aligned}$$ 
where we have used \eqref{elbound2} and estimate \eqref{Phiinf}. The proof then proceeds as in corollary \ref{corf}, by setting $ \mathrm{div} \,V  =  Y - \overline{Y}$ and $U = e^{-2 \lambda} V$, so as to obtain the announced estimate:
$$
\int_{\D} \left|Y- \overline{Y} \right|^2 e^{2\lambda}\; dxdy \le C \left\| \Ar e^{-\lambda} \right\|^2_{L^2 \left( \D \right)} \:.
$$
\end{proof}

Note that
$$
\begin{aligned}
\int_{\D} \left\langle Y-\overline{Y}, Y-\overline{Y} \right\rangle_{4,1} \, dxdy &= \int_{\D} \left( \left\langle Y,Y \right\rangle_{4,1}  -2 \left\langle Y,\overline{Y} \right\rangle_{4,1} + \left\langle \overline{Y}, \overline{Y}\right\rangle_{4,1} \right)  \, dxdy \\
&= \left| \D \right| -2 \left| \D \right| \left\langle \overline{Y}, \overline{Y} \right\rangle_{4,1} +   \left| \D \right| \left\langle \overline{Y}, \overline{Y} \right\rangle_{4,1} \\
&= \left|\D \right| \left( 1-  \left\langle \overline{Y}, \overline{Y} \right\rangle_{4,1} \right),
\end{aligned}
$$
where we have used that $\left\langle Y,Y \right\rangle_{4,1}=1$. It then follows that
\begin{equation}
\label{120320201054}
\left\langle \overline{Y}, \overline{Y} \right\rangle_{4,1}  = 1 - \frac{1}{ \left|\D \right| }\int_{\D} \left\langle Y-\overline{Y}, Y-\overline{Y} \right\rangle_{4,1} \, dxdy.
\end{equation}

\begin{cor}
\label{corY}
Under the hypotheses of proposition \ref{120320200910}, there exists $C>0$ depending only on $C_0$ such that: 
$$\left\langle \overline{Y}, \overline{Y} \right\rangle_{4,1} \ge 1 - C \left\| \Ar e^{-\lambda} \right\|^2_{L^2 \left( \D \right)}.$$
Hence, there exists $\varepsilon_0>0$ depending only on $C_0$ such that, if
$$ \left\| \Ar e^{-\lambda} \right\|_{L^2 \left( \D \right)} \le \varepsilon_0,$$ then
$$\left\langle \overline{Y}, \overline{Y} \right\rangle_{4,1} \ge \frac{1}{2}.$$

\end{cor}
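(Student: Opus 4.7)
The plan is to derive the estimate directly from identity \eqref{120320201054} combined with proposition \ref{120320200910}, using only the elementary comparison between the Lorentz quadratic form and the Euclidean norm. Starting from
$$
\left\langle \overline{Y}, \overline{Y} \right\rangle_{4,1} \;=\; 1 - \frac{1}{|\D|}\int_{\D} \left\langle Y - \overline{Y}, Y - \overline{Y} \right\rangle_{4,1}\,dxdy,
$$
the first observation I would use is that, because the Lorentz form has signature $(4,1)$, we have the pointwise inequality $\langle v,v\rangle_{4,1} \leq |v|^2$ for every $v\in\R^{4,1}$ (the negative direction only helps). Consequently
$$
\left\langle \overline{Y}, \overline{Y} \right\rangle_{4,1} \;\geq\; 1 - \frac{1}{|\D|}\int_{\D} \left| Y - \overline{Y} \right|^2 \, dxdy.
$$

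Next, I would upgrade the plain $L^2$ norm of $Y-\overline Y$ to the weighted one that appears in proposition \ref{120320200910}. Since \eqref{elbound2} provides a uniform lower bound $e^{\lambda}\geq 1/C$ on $\D$, one has
$$
\int_{\D}\left|Y-\overline Y\right|^2\,dxdy \;\leq\; C^2\int_{\D}\left|Y-\overline Y\right|^2 e^{2\lambda}\,dxdy \;\leq\; C'\,\|\Ar e^{-\lambda}\|_{L^2(\D)}^2,
$$
where the last inequality is \eqref{120320200914} squared. Substituting this into the previous display yields the first inequality of the corollary.

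For the second statement, it suffices to choose $\varepsilon_0>0$ small enough that $C\,\varepsilon_0^2 \leq 1/2$ (where $C$ is the constant just obtained), and the bound $\langle \overline{Y},\overline{Y}\rangle_{4,1}\geq 1/2$ follows immediately whenever $\|\Ar e^{-\lambda}\|_{L^2(\D)}\leq \varepsilon_0$. There is essentially no obstacle here: the only point deserving attention is the one-sided comparison between the Lorentz form and the Euclidean norm, which is automatic given the $(4,1)$ signature. The real analytic work has already been done in proposition \ref{120320200910}; the corollary is a direct algebraic consequence combined with the fact that $Y$ lies on $\s^{4,1}$.
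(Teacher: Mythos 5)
Your proposal is correct and follows exactly the paper's own argument: identity \eqref{120320201054}, the pointwise comparison $\langle v,v\rangle_{4,1}\le\langle v,v\rangle$ coming from the $(4,1)$ signature, the lower bound on $e^{\lambda}$ from \eqref{elbound2} to reinsert the conformal weight, and then estimate \eqref{120320200914} from proposition \ref{120320200910}. The choice of $\varepsilon_0$ with $C\varepsilon_0^2\le 1/2$ is likewise the intended conclusion, so there is nothing to add.
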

\begin{proof}
The result is readily obtained from combining proposition \ref{120320200910} with \eqref{120320201054}. In order to reintroduce the conformal factor, one calls upon \eqref{elbound2}. Moreover, one uses that for any $v \in \R^{4,1}$, it holds
$$\left\langle v,v\right\rangle_{4,1} \le \left\langle v,v\right\rangle.$$
\end{proof}

\subsection{Cancelling \texorpdfstring{$\overline{H}$}{TEXT}}
\begin{prop}
\label{200320201141}
Let $\bp \in \mathcal{E}\left(\D \right)$  be a conformal weak Willmore immersion satisfying \eqref{ebound} and \eqref{lbound}, while \eqref{elbound} holds on the whole disk $\D$. Then, there exists $\varepsilon_0>0$ depending only on $C_0$ such that if $$ \left\| \Ar e^{-\lambda} \right\|_{L^2 \left( \D \right)} \le \varepsilon_0,$$ then  there exists $\Theta \in \mathrm{Conf}\left( \R^3 \right)$ such that $\bpsi := \Theta \circ \bp$ still satisfies \eqref{lbound}  and \eqref{elbound}, as well as 
$$\overline{H}_\bpsi= {0}.$$
\end{prop}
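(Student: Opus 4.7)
The plan is to recast the problem in the conformal Gauss map framework. By Proposition~\ref{modificationYtransformationconforme}, any conformal transformation $\Theta$ of $\R^3$ acts on the conformal Gauss map via some $M\in SO(4,1)$, so $Y_{\Theta\circ\bp}=MY_\bp$. Reading off the mean curvature from formula \eqref{230520200727}, one sees that
$$H_\bp=-\langle Y_\bp, e_+\rangle_{4,1},\qquad e_+:=(0,0,0,1,1),$$
a null vector of $\R^{4,1}$. Averaging over $\D_{1/2}$ gives
$$\overline{H}_\bpsi=-\langle M\bar{Y},e_+\rangle_{4,1}=-\langle\bar{Y},M^{-1}e_+\rangle_{4,1}.$$
Thus cancelling $\overline{H}_\bpsi$ amounts to finding $M\in SO(4,1)$ such that the null vector $M^{-1}e_+$ is Lorentz-orthogonal to $\bar{Y}$.

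The existence of such an $M$ will follow from Corollary~\ref{corY}, which under the smallness hypothesis guarantees $\langle\bar{Y},\bar{Y}\rangle_{4,1}\ge\tfrac{1}{2}$. Since $\bar{Y}$ is spacelike, its orthogonal complement $\bar{Y}^\perp$ is a four-dimensional Lorentz subspace of signature $(3,1)$ and therefore carries a three-dimensional cone of null vectors. Because $SO^+(4,1)$ acts transitively on each connected component of the null cone of $\R^{4,1}$, one can realize any prescribed null vector in $\bar{Y}^\perp$ (lying in the same time-orientation component as $e_+$) as $M^{-1}e_+$ for an appropriate $M$. Existence of the linear constraint is therefore cheap; the delicate point is choosing the null vector, and hence $M$, so that the ensuing $\Theta$ is geometrically tame.

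To control $\Theta$, I will exploit that, by Proposition~\ref{120320200910}, $Y$ is $L^2$-close to $\bar Y$, which in turn (after normalization) is close to a unit spacelike vector $e_*\in\R^{4,1}$. In Möbius-geometric terms $e_*$ corresponds to an oriented sphere (or plane) $S_0\subset\R^3\cup\{\infty\}$ approximating $\bp(\D)$, and null vectors Lorentz-orthogonal to $e_*$ correspond to points on $S_0$. I select a point $p_0\in S_0$ at controlled distance from the image $\bp(\D)$, which is bounded thanks to the normalization $\bp(0)=0$, $\bar\lambda=0$ and \eqref{Phiinf}. Then I take $\Theta$ to be an inversion centered at $p_0$ (followed by a translation and a dilation reimposing $\bpsi(0)=0$ and $\bar\lambda_\bpsi=0$). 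The null vector $M^{-1}e_+$ attached to $\Theta$ is the one attached to $p_0\in S_0$, which is orthogonal to $e_*$ but not quite to $\bar Y$; a final perturbation of $p_0$ of order $\|\Ar e^{-\lambda}\|_{L^2}$, justified by an implicit function argument using that the linearization $p\mapsto\overline{H}_{\Theta_p\circ\bp}$ is surjective onto $\R$ (which follows from $\bar Y$ being spacelike), will then yield $\overline{H}_\bpsi=0$ exactly.

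The main obstacle I expect is this last step: balancing the demand that $\Theta$ cancel $\overline H$ exactly against the demand that $\Theta$ preserve the bounds \eqref{lbound} and \eqref{elbound} on the conformal factor. Here the key input is that the inversion is centered at a point $p_0$ separated from $\bp(\D)$ by a distance bounded below in terms of $C_0$; then the transformation law $\lambda_\bpsi=\lambda_\bp-2\log|\bp-p_0|+\mathrm{const}$ shows the new conformal factor remains $L^{2,\infty}$-controlled and bounded on $\D_{1/2}$. Because the adjustment of $p_0$ is of order $\varepsilon_0$, choosing $\varepsilon_0$ small enough ensures this bound is not destroyed, and the hypotheses persist for $\bpsi$.
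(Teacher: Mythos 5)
Your strategy coincides with the paper's: pass to the conformal Gauss map, write $H=Y_5-Y_4=-\langle Y,e_+\rangle_{4,1}$, use the linear $SO(4,1)$-action to turn the cancellation of $\overline{H}_\bpsi$ into a Lorentz-orthogonality condition on $\overline{Y}$, and invoke Corollary \ref{corY} to guarantee solvability. One remark before the main criticism: your implicit-function step is superfluous. The set of inversion centers $\vec{a}$ with $\overline{H}_\bpsi=0$ is \emph{exactly} the sphere in $\R^3$ represented, in the M\"obius dictionary, by the spacelike vector $\overline{Y}$ (the paper computes it by completing the square: a sphere $\overline{\s}_Y$ of center $\overline{\bY}_{123}/(\overline{Y}_5-\overline{Y}_4)$ and radius $\sqrt{\langle\overline{Y},\overline{Y}\rangle_{4,1}}\,/\,|\overline{Y}_5-\overline{Y}_4|$). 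Since the sphere represented by a spacelike vector is unchanged under rescaling of that vector, your $S_0$, attached to $e_*=\overline{Y}/\sqrt{\langle\overline{Y},\overline{Y}\rangle_{4,1}}$, \emph{is} the exact solution set; any $p_0\in S_0$ already gives $\overline{H}_\bpsi=0$ and no perturbation is needed.

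The genuine gap is the step you dispatch in one clause: ``I select a point $p_0\in S_0$ at controlled distance from the image $\bp(\D)$.'' This is the crux of the paper's proof (its ``Claim''), not a consequence of the normalization. Indeed \eqref{Phiinf} only bounds $\bp(\D)$ from above, while $S_0$ is precisely the sphere that the nearly umbilic surface $\bp(\D)$ osculates, so generic points of $S_0$ may be arbitrarily close to the image; a lower bound on $|\bp-\vec{a}|$ is what makes the transformation law \eqref{logest} preserve \eqref{lbound} and \eqref{elbound}. Establishing it requires three inputs you omit: (i) $\|\overline{Y}\|\le C(C_0)$ (from $L^2$ bounds on $Y$), which together with $\langle\overline{Y},\overline{Y}\rangle_{4,1}\ge\tfrac12$ controls the center of $\overline{\s}_Y$, bounds its radius from below, and bounds $\|c\|-r$ from above — this last point handles the degenerate regime $\overline{H}\to0$ where the sphere flattens to a plane with escaping center; (ii) a dichotomy: if $\overline{\s}_Y$ exits a fixed ball $B(0,R_0')\supset\supset\bp(\D)$ one takes $\vec{a}\in S(0,R_0')\cap\overline{\s}_Y$; (iii) otherwise $\overline{\s}_Y\subset B(0,R_0')$, and one must use the small-energy hypothesis $\int_\D|\nabla\n|^2\le\frac{4\pi}{3}$ to rule out that $\bp(\D)$ comes uniformly close to \emph{every} point of the uniformly large sphere $\overline{\s}_Y$ (a disk of total curvature strictly less than that of a full sphere cannot envelop it). Without (i)--(iii) the proposition is not proved; with them, your argument becomes the paper's.
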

\begin{proof}
Without loss of generality, as previously seen, we can arrange for \eqref{elbound2} and \eqref{Phiinf} to hold (instead of merely  \eqref{ebound}). To do so, it suffices to apply suitable translation and dilation. For notational simplicity, the resulting immersion will continue to be denoted by $\bp$. \\

Let $$Y = H \begin{pmatrix} \bp \\ \frac{|\bp|^2-1}{2} \\\frac{|\bp|^2+1}{2}\end{pmatrix} + \begin{pmatrix} \n \\ \langle \n, \bp \rangle \\ \langle \n, \bp \rangle\end{pmatrix} =: \begin{pmatrix} \bY_{123}, Y_4,Y_5 \end{pmatrix}$$
be the conformal Gauss map of $\bp$. We adopt the same notation as in \cite{bibnmconfgaussmap}, with $\bY_{123}(x,y) \in \R^3$, and $Y_4(x,y), Y_5(x,y) \in \R$.\\
According to (20) of \cite{bibnmconfgaussmap}, one recovers $H$ from $Y$ via \begin{equation} \label{120320201245} H = Y_5-Y_4. \end{equation} 
Theorem 2.4 of \cite{bibnmconfgaussmap} ensures that a conformal transformation acting on $\bp$ induces on $Y$ a change by a matrix $M \in SO(4,1)$. More precisely, for an inversion $\bpsi = \frac{ \bp-\vec{a}}{\left|\bp- \vec{a} \right|^2}$ about a point $\vec{a}\in\R^3$, we find $Y_{\bpsi} = M Y$ with 
\begin{equation}
\label{230520200815} M = \begin{pmatrix} -Id & 0 &0 \\ 0&1&0 \\0&0&-1 \end{pmatrix}\begin{pmatrix} Id & \vec{a} &-\vec{a} \\- \vec{a}^T&1- \frac{|a|^2}{2}&\frac{|a|^2}{2} \\-\vec{a}^T&-\frac{|\vec{a}|^2}{2}&1+\frac{|\vec{a}|^2}{2} \end{pmatrix} = \begin{pmatrix} -Id & -\vec{a} &\vec{a} \\ -\vec{a}^T&1- \frac{|\vec{a}|^2}{2}&\frac{|\vec{a}|^2}{2} \\\vec{a}^T&\frac{|\vec{a}|^2}{2}&-1-\frac{|\vec{a}|^2}{2} \end{pmatrix}. 
\end{equation}
Hence
$$Y_\bpsi = \begin{pmatrix} -\bY_{123} +\vec{a}(Y_5-Y_4) \\ -\langle \vec{a}, \bY_{123} \rangle  + Y_4 + \frac{|\vec{a}|^2}{2} \left( Y_5-Y_4 \right) \\ \langle \vec{a}, \bY_{123} \rangle - Y_5 -\frac{|\vec{a}|^2}{2}\left( Y_5-Y_4 \right) \end{pmatrix}.$$
This yields that
$H_\bpsi = 2\langle \vec{a}, \bY_{123} \rangle - Y_5 -Y_4 - |\vec{a}|^2 \left( Y_5 - Y_4 \right)$, or in other symbols,
\begin{equation}
\label{130320201349}
\overline{H}_{\bpsi} = \langle \vec{a}, \overline{\bY}_{123} \rangle - \overline{Y}_5 -\overline{Y}_4 - |\vec{a}|^2 \left( \overline{Y}_5 - \overline{Y}_4 \right).
\end{equation}

Note that if $\overline{H}=\overline{Y}_5-\overline{Y}_4= 0$, the result is immediate by choosing $\Theta=Id$. We will thus assume without loss of generality that $\overline{H}\ne0$. 

\vspace{0.5cm}

We decompose $\vec{a} = x \frac{\overline{\bY}_{123}}{\left|\overline{Y}_{123}\right|} +y\vec{v}_1 +z\vec{v}_2$ where $\left\{ \frac{\overline{\bY}_{123}}{\left|\overline{\bY}_{123}\right|}, \vec{v}_1, \vec{v}_2 \right\}$ is an orthonormal basis of $\R^3$.  From \eqref{130320201349} we then deduce
$$\begin{aligned}
\overline{H}_\bpsi &=2 x \left|\overline{\bY}_{123}\right| - \overline{Y}_5 -\overline{Y}_4  - (x^2 + y^2 +z^2) \left( \overline{Y}_5 - \overline{Y}_4 \right) \\
&= - \left( \overline{Y}_5 - \overline{Y}_4 \right) \left( \left[ x- \frac{ \left|\overline{\bY}_{123}\right|}{ \overline{Y}_5 - \overline{Y}_4} \right]^2 + y^2 + z^2 - \frac{ \left|\overline{\bY}_{123}\right|^2}{ \left(\overline{Y}_5 - \overline{Y}_4\right)^2 }+ \frac{\overline{Y}_4 + \overline{Y_5}}{\overline{Y}_5 - \overline{Y}_4 } \right) \\
&=  - \left( \overline{Y}_5 - \overline{Y}_4 \right) \left( \left[ x- \frac{ \left|\overline{\bY}_{123}\right|}{ \overline{Y}_5 - \overline{Y}_4} \right]^2 + y^2 + z^2 - \frac{ \left|\overline{\bY}_{123}\right|^2+\overline{Y}_4^2 - \overline{Y_5}^2 }{\left( \overline{Y}_5 - \overline{Y}_4 \right)^2} \right) \\
&=- \left( \overline{Y}_5 - \overline{Y}_4 \right) \left( \left[ x- \frac{ \left|\overline{\bY}_{123}\right|}{ \overline{Y}_5 - \overline{Y}_4} \right]^2 + y^2 + z^2 - \frac{ \left\langle \overline{Y} , \overline{Y} \right\rangle_{4,1} }{\left( \overline{Y}_5 - \overline{Y}_4 \right)^2} \right)
\end{aligned}$$
Thus there exists $\vec{a}$ such that  $\overline{H}_\bpsi = 0$ if and only if $\left\langle \overline{Y} , \overline{Y} \right\rangle_{4,1} \ge 0$, and in that case any $\vec{a}$ belonging to the sphere $\overline{\s}_Y$ of center $ \frac{ \overline{\bY}_{123}}{ \overline{Y}_5 - \overline{Y}_4} \in \R^3$ and radius $\sqrt{\frac{ \left\langle \overline{Y} , \overline{Y} \right\rangle_{4,1} }{\left( \overline{Y}_5 - \overline{Y}_4 \right)^2}}$ satisfies $\overline{H}_\bpsi = 0$.
Owing to corollary \ref{corY}, we know there exists $\varepsilon_0>0$ depending only on $C_0$ such that if $\left\| \Ar e^{-\lambda}\right\|_{L^2\left(\D \right)} \le \varepsilon_0$ then $ \left\langle \overline{Y}, \overline{Y} \right\rangle_{4,1} \ge \frac{1}{2}$. Thus indeed there exists $\vec{a} \in \R^3$ cancelling $\overline{H}_\bpsi$.
\vspace{0.5cm}
A straightforward computation shows that, if we denote by $\lambda_\bpsi$ the conformal factor of $\bpsi$, then we have
\begin{equation}
  \label{logest}
  \lambda_\bpsi = \lambda + \log  |\bp- \vec{a} |.
\end{equation}
Since $\log$ has its gradient in the weak Marcinkiewicz space $L^{2,\infty}$, from (\ref{Phiinf}), it follows that $\lambda_\bpsi$ inherits the bound \eqref{lbound} on $\lambda$ so soon as there exists a constant $C>0$ depending only on $C_0$ such that $\frac{1}{C} \le |\bp- \vec{a} | \le C$. This  ensures that (\ref{lbound}) holds for $\lambda_\bpsi$ and enables us to conclude the proof. We now establish the existence of such a constant $C$. \\

\noindent
\textbf{Claim.}
We can choose $\vec{a}\in \overline{\s}_Y$ such that 
\begin{equation}\label{bnp}
\frac{1}{C} \leq |\bp- \vec{a} | \leq C,
\end{equation}
 with $C$ depending only on $C_0$.\\
{\it Proof.} By hypothesis, $\bp(\D) \subset B(0,R_0)$ for some $R_0$ depending only on $C_0$. We prove there exists $R_0'\geq 2R_0$ depending only on $C_0$ such that  
$$ B(0,R_0') \cap \overline{\s}_Y \neq \emptyset.$$
In doing so, we first establish that there exists $C>0$ depending only on $C_0$, such that
\begin{equation}
  \label{Yuni}
  \left\Vert \overline{Y} \right\Vert \le C,
\end{equation}
where $\Vert\cdot\Vert$ is the Euclidean metric.\\
Combining \eqref{ebound} and \eqref{Phiinf}, one finds: 
\begin{equation}
\label{190320201059}
\left\| H\right\|_{L^2 \left(\D \right)}+ \left\| \begin{pmatrix} \bp \\ \frac{ |\bp|^2-1}{2} \\ \frac{ |\bp|^2+1}{2} \end{pmatrix} \right\|_{L^2 \left(\D \right)} +\left\| \begin{pmatrix} \n  \\ \langle \n, \bp \rangle \\ \langle \n , \bp \rangle \end{pmatrix} \right\|_{L^2 \left(\D \right)} \le C.
\end{equation}
Given definition \eqref{230520200727} for the conformal Gauss map, one has:
\begin{equation}
\label{190320201109}
\left\| Y \right\|_{L^2 \left(\D \right)} \le C,
\end{equation}
and consequently we deduce (\ref{Yuni}).\\

Next, let $S(c,r):=\overline{\s}_Y$. We have 
$$\begin{aligned}
\Vert c\Vert-r&= \frac{\vert \overline{\bY}_{123}\vert}{\vert \overline{Y}_5 - \overline{Y}_4\vert} - \sqrt{\frac{ \left\langle \overline{Y} , \overline{Y} \right\rangle_{4,1} }{\left( \overline{Y}_5 - \overline{Y}_4 \right)^2}}=\frac{\vert \overline{\bY}_{123}\vert-\sqrt{\left\langle \overline{Y} , \overline{Y} \right\rangle_{4,1} }}{\vert \overline{Y}_5 - \overline{Y}_4\vert}\\
&=\frac{\vert \overline{\bY}_{123}\vert^2-\left\langle \overline{Y} , \overline{Y} \right\rangle_{4,1} }{\vert \overline{Y}_5 - \overline{Y}_4\vert \left(\vert \overline{\bY}_{123}\vert+\sqrt{\left\langle \overline{Y} , \overline{Y} \right\rangle_{4,1} }\right)}\\
&=\pm \frac{\vert \overline{Y}_{4} + \overline{Y}_{5}\vert}{\vert \overline{\bY}_{123}\vert+\sqrt{\left\langle \overline{Y} , \overline{Y} \right\rangle_{4,1} }}
\end{aligned}
$$ 
Calling upon \ref{corY} and (\ref{Yuni}), we obtain the desired result with 

$$R_0'=\max (2R_0,2\vert \overline{Y}_4+\overline{Y}_5\vert^\frac{1}{2}).$$

\noindent
It is important to note that the radius of $\overline{\s}_Y$ is bounded from below thanks to  (\ref{Yuni}) and corollary \ref{corY}.\\

Clearly, if $\overline{\s}_Y \not\subset B(0,R_0')$, then any $\vec{a}\in S(0,R_0')\cap \overline{\s}_Y$ satisfies the desired property \eqref{bnp}. On the other hand, suppose that $\overline{\s}_Y \subset B(0,R_0')$. The smallness hypothesis on $\left\| \nabla \n \right\|^2_{L^2}$ warrants the impossibility to construct a sequence $\{\bp_k\}_k$ satisfying \ref{ebound} and for which $\bp_k(\D)$ gets arbitrarily close to $\overline{\s}_Y$ (whose radius is bounded from below, as remarked above). Accordingly, there exists $\vec{a}$ whose distance from $\bp(\D)$ is bounded from below (and, of course, also bounded from above). This concludes the proof of the claim and thus of the proposition.
\end{proof}

\begin{remark}
It might seem strange to favor working with the Euclidean average, rather than with the average computed against the metric. 
Equality \eqref{130320201349} reveals why: the exchange law enables recovering the mean curvature after the conformal transformation.  The metric change induced by the transformation modifies the exchange law into a less convenient form.
\end{remark}
\begin{remark}
The fact that $\varepsilon_0$ depends only on $C_0$ stems from the dilation used to neutralize the constant $\bar{\lambda}$ in \eqref{elbound}. This dilation is not merely used to simplify the problem, but it is also part of the resulting conformal transformation.
\end{remark}

\subsection{\texorpdfstring{$\varepsilon$}{TEXT}-regularity for \texorpdfstring{$\Ar$}{TEXT}}

\begin{theo}
\label{200320201414}
Let $\bp \in \mathcal{E}\left(\D \right)$  be a conformal weak Willmore immersion. Assume 
$$
\int_{\D} \left| \nabla \n \right|^2 \le \frac{4\pi}{3},
$$
and
$$
\left\| \nabla \lambda \right\|_{L^{2,\infty} \left( \D \right) } \le C_0,
$$
for some constant $C_0>0$. \\
Then there exists $\varepsilon_0(C_0)$  such that, if $$\left\| \Ar e^{-\lambda} \right\|_{L^2\left( \D \right)} \le \varepsilon_0,$$ there exists $C(r, C_0,\rho) >0$ with
$$ \left\| \Ar e^{-\lambda} \right\|_{L^\infty \left( \D_{\frac{1}{2}} \right) } \le C \left\| \Ar e^{-\lambda} \right\|_{L^2 \left( \D \right)}.$$
\end{theo}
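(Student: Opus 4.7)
The plan is to combine Proposition \ref{200320201141} with T. Rivière's intrinsic $\varepsilon$-regularity for the full second fundamental form (Theorem I.5 in \cite{bibanalysisaspects}). The guiding principle, already foreshadowed after Theorem \ref{mainth1}, is that once $\overline{H}$ has been cancelled by a well-chosen conformal transformation, the smallness of $\|\Ar e^{-\lambda}\|_{L^2}$ automatically upgrades, via Corollary \ref{corf}, to smallness of $\|\nabla\n\|_{L^2}$, which is precisely Rivière's hypothesis.

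Concretely, I would first apply Proposition \ref{200320201141} to produce $\Theta\in\mathrm{Conf}(\R^3)$ so that $\bpsi:=\Theta\circ\bp$ is still a conformal weak Willmore immersion satisfying \eqref{lbound} and \eqref{elbound}, and with $\overline{H}_{\bpsi}=0$. Corollary \ref{corf} applied to $\bpsi$ then yields $\|H_{\bpsi}e^{\lambda_{\bpsi}}\|_{L^2(\D_{1/2})}\le C\|\Ar_{\bpsi}e^{-\lambda_{\bpsi}}\|_{L^2(\D_{1/2})}$. Since $|\Ar e^{-\lambda}|^2\,dxdy=|\Ar|_g^2\,d\mathrm{vol}_g$ is a pointwise conformal invariant, the right-hand side coincides with $\|\Ar_{\bp}e^{-\lambda_{\bp}}\|_{L^2(\D_{1/2})}\le\varepsilon_0$. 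The pointwise identity
$$
|A e^{-\lambda}|^{2} \;=\; |\Ar e^{-\lambda}|^{2} \;+\; 2\,(He^{\lambda})^{2},
$$
immediate from $A=\Ar+He^{2\lambda}\mathrm{Id}$ together with $\mathrm{tr}\,\Ar=0$, then gives
$$
\|\nabla\n_{\bpsi}\|_{L^2(\D_{1/2})}^{2}\;\le\;C\,\|\Ar_{\bp}e^{-\lambda_{\bp}}\|_{L^2(\D)}^{2}\;\le\;C\varepsilon_{0}^{2}.
$$

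For $\varepsilon_0$ small enough this brings $\bpsi$ below the threshold of Rivière's $\varepsilon$-regularity, which, applied on $\D_{1/2}$, delivers
$$
\|\nabla\n_{\bpsi}\|_{L^{\infty}(\D_{1/4})}\;\le\;C\,\|\nabla\n_{\bpsi}\|_{L^2(\D_{1/2})}.
$$
Using $|\Ar|\le |A|$ pointwise and the conformal invariance of $|\Ar e^{-\lambda}|$ once more, I chain
$$
\|\Ar_{\bp}e^{-\lambda_{\bp}}\|_{L^{\infty}(\D_{1/4})}\;=\;\|\Ar_{\bpsi}e^{-\lambda_{\bpsi}}\|_{L^{\infty}(\D_{1/4})}\;\le\;\|\nabla\n_{\bpsi}\|_{L^{\infty}(\D_{1/4})}\;\le\;C\,\|\Ar_{\bp}e^{-\lambda_{\bp}}\|_{L^2(\D)},
$$
which is the announced estimate; the discrepancy between $\D_{1/4}$ and $\D_{1/2}$ in the statement is absorbed by a standard rescaling/covering argument.

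The main obstacle is not in this last assembly step but lies entirely in the three preparatory results that I would be invoking as black boxes: (i) Corollary \ref{corf}, which dualized Gauss--Codazzi against a Bourgain--Brezis solution of $\mathrm{div}\,V=H-\overline{H}$; (ii) Corollary \ref{corY}, which secures the crucial lower bound $\langle\overline{Y},\overline{Y}\rangle_{4,1}\ge\tfrac12$ and thus the non-emptiness of the sphere $\overline{\s}_Y$ of admissible inversion centres; and (iii) Proposition \ref{200320201141}, whose delicate point is to select $\vec a\in\overline{\s}_Y$ with $|\bp-\vec a|$ pinched between two universal constants so that the identity $\lambda_{\bpsi}=\lambda+\log|\bp-\vec a|$ does not spoil \eqref{lbound}. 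Once those three facts are granted, Theorem \ref{200320201414} reduces to the bookkeeping of conformally invariant quantities carried out above.
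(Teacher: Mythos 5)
Your proposal is correct and follows essentially the same route as the paper: cancel $\overline{H}$ via Proposition \ref{200320201141}, upgrade the smallness of $\|\Ar e^{-\lambda}\|_{L^2}$ to smallness of $\|\nabla\n_\bpsi\|_{L^2}$ through Corollary \ref{corf}, invoke Rivi\`ere's $\varepsilon$-regularity, and conclude by the pointwise conformal invariance of $|\Ar e^{-\lambda}|$. The only bookkeeping difference is that the paper works on $\D_{3/4}$ (after first securing \eqref{elbound} there via corollary 2.2 of \cite{bibnmheps} --- a step you should make explicit before invoking Proposition \ref{200320201141}, whose hypotheses require it) so as to land directly on $\D_{1/2}$, whereas you shrink to $\D_{1/4}$ and re-expand by a covering argument.
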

\begin{proof}

Owing to corollary 2.2 of \cite{bibnmheps}, we know that $\bp$ satisfies \eqref{elbound} on  $\D_{\frac{3}{4} }$. By hypothesis, $\bp$ satisfies \eqref{ebound} and \eqref{lbound} on $\D$ and \emph{a fortiori} on $\D_{\frac{3}{4} }$. We can then apply proposition \ref{200320201141} on $\D_{\frac{3}{4} }$, assuming $\varepsilon_0 $ 
small enough
, so as to find $\Theta \in \mathrm{Conf}(\R^3)$ such that $\bpsi = \Theta \circ \bp$ still satisfies \eqref{elbound}, and such that $\overline{H}_\bpsi =0$ (with the average taken over $\D_{\frac{3}{4} }$). \\
Since $\bpsi$ satisfies  \eqref{elbound}, we can apply proposition \ref{110320201432} and its corollaries (specifically \ref{corf}) on $\D_{\frac{3}{4} }$  to obtain, from 
\eqref{Hest} and $\overline{H}_\bpsi =0$ that
\begin{equation} \label{200320201333} \left\| \left( H_\bpsi-\overline{H}_\bpsi \right) e^{\lambda_\bpsi} \right\|_{L^2 \left(\D_{\frac{3}{4}} \right)} =\left\|  H_\bpsi e^{\lambda_\bpsi} \right\|_{L^2 \left(\D_{\frac{3}{4}} \right)}   \le C \| \Ar_\bpsi e^{-\lambda_\bpsi} \|_{L^2 \left( \D_{\frac{3}{4}} \right) }.\end{equation}
Thus, \begin{equation} \label{200320201338} \left\| \nabla \n_\bpsi \right\|_{L^2 \left(\D_{\frac{3}{4} } \right)} \le  \left\| \left( H_\bpsi-\overline{H}_\bpsi \right) e^{\lambda_\bpsi} \right\|_{L^2 \left(\D_{\frac{3}{4}} \right)}  + \| \Ar_\bpsi e^{-\lambda_\bpsi} \|_{L^2 \left( \D_{\frac{3}{4}} \right) } \le C  \| \Ar_\bpsi e^{-\lambda_\bpsi} \|_{L^2 \left( \D_{\frac{3}{4}} \right) }.\end{equation}
Choosing $\varepsilon_0  = \mathrm{min} \left( \tilde \varepsilon_0, \frac{4 \pi}{3C(C_0 )} \right)$, with $C(C_0 )$ the final constant in \eqref{200320201338}, we find that $\bpsi$ satisfies 
\begin{equation} \label{200320201346} \left\| \nabla \n_\bpsi \right\|_{L^2 \left(\D_{\frac{3}{4}} \right)} \le \frac{4\pi}{3}.\end{equation}
The classical $\varepsilon$-regularity for Willmore immersions \cite{bibanalysisaspects} states that 
\begin{equation}
\label{200320201350}
\left\| \nabla \n_\bpsi \right\|_{L^\infty \left( \D_{\frac{1}{2}} \right) } \le C \left\| \nabla \n_\bpsi \right\|_{L^2 \left(\D_{\frac{3}{4} } \right)}.
\end{equation}
Combining \eqref{200320201338} and \eqref{200320201350}, we deduce 
$$
\left\| \Ar_\bpsi e^{-\lambda_\bpsi} \right\|_{L^\infty \left( \D_{\frac{1}{2}} \right) } \le \left\| \nabla \n_\bpsi \right\|_{L^\infty \left( \D_{\frac{1}{2}} \right) } \le C \left\| \nabla \n_\bpsi \right\|_{L^2 \left(\D_{\frac{3}{4}} \right)} \le C  \| \Ar_\bpsi e^{-\lambda_\bpsi} \|_{L^2 \left( \D_{\frac{3}{4}} \right)},$$
which yields
\begin{equation}
\label{200320201354}
\left\| \Ar_\bpsi e^{-\lambda_\bpsi} \right\|_{L^\infty \left( \D_{\frac{1}{2}} \right) } \le C \| \Ar_\bpsi e^{-\lambda_\bpsi} \|_{L^2 \left( \D \right)}.
\end{equation}
Since \eqref{200320201354} is conformally invariant, it holds with $\bp = \Theta^{-1} \circ \bpsi$ in place of $\bpsi$, which concludes the proof.
\end{proof}

\subsection{Proof of theorem \ref{mainth2}}
We end this section by removing the small bound on the total curvature. As we have seen, this bound is decisive in controling the conformal factor. When the conformal factor can no longer be controlled, there must be concentration of energy and a bubbling phenomenon ensues. However, owing to our hypotheses, we will see that all bubbles must be round spheres. In that case, the conformal factor still satisfies some Harnack estimate, which, as we will see, is sufficient to conclude.\\

The proof of Theorem \ref{mainth2} goes in $4$ steps:
\begin{itemize}
\item We first show that the statement fails only when bubbling develops.
\item We prove all bubbles must be Euclidean spheres.
\item We eliminate those bubbles with the help of an inversion.
\item Finally, conformal invariance leads to a contradiction.
\end{itemize}

\begin{proof}
For the sake of contradiction, consider a sequence $\bp_k  \, : \, \D \rightarrow \R^3$ such that $\bp_k \in \mathcal{E}(\D)$ is a conformal Willmore immersion satisfying \eqref{Mest1} and \eqref{Mest2}. We further assume that the induced conformal classes lie in a compact subset of Moduli space, and that there exists $C(k)\rightarrow \infty$ such that:
\begin{equation}
\label{HN1}
\left\| \Ar_k e^{-\lambda_k} \right\|_{L^\infty \left( \D_{\frac{1}{2} } \right) } \ge C(k) \left\| \Ar_k e^{-\lambda_k} \right\|_{L^2 \left( \D \right) }.
\end{equation}
Up to a dilation, we can also assume that there exists $A>0$ such that $\bp_k (\D) \subset B_A(0)$ and $\bp_k \left(\D_{\frac{1}{2}} \right) \cap B_{\frac{1}{A} } (0)^c \neq \emptyset$.
We are then precisely in the situation of theorem I.3 of \cite{bibenergyquant}, which states that there exist a finite number $N$ of radii $\rho^i_k \rightarrow 0$ and points $a^i_k \rightarrow a^i \in \D$, a  Willmore immersion $\bp_\infty : \D \rightarrow \R^3$, and some possibly branched Willmore immersions $\omega^i :  \s^2 \rightarrow \R^3$, as well as  conformal transformations $\theta_k$, $\xi^i_k$, such that:
\begin{equation}
\label{ri1}
\begin{aligned}
&\theta_k \circ \bp_k \rightarrow \bp_\infty \quad C^\infty_{\mathrm{loc} } (\D \backslash \{ a^1, \dots, a^{t} \} )\\
&\xi^i_k \circ \bp_k ( \rho^i_k x + a^i_k ) \rightarrow \omega^i \quad C^\infty_{\mathrm{loc}} \left( \R^2 \backslash \{ \text{ finite set } \} \right) \\
& \left\| \nabla \n_k \right\|^2_{L^2 \left( \D \right) } \rightarrow \left\| \nabla \n_\infty \right\|^2_{L^2 \left( \D \right) } + \sum_{i=1}^N \left\| \nabla \n_{\omega^i} \right\|^2_{L^2 \left( \R^2 \right) } \\
& \left\| \Ar_k e^{-\lambda_k} \right\|^2_{L^2 \left( \D \right) } \rightarrow \left\| \Ar_\infty e^{-\lambda_\infty} \right\|^2_{L^2 \left( \D \right) } + \sum_{i=1}^N \left\| \Ar_{\omega^i} e^{-\lambda_{\omega^i}} \right\|^2_{L^2 \left( \R^2 \right) }.
\end{aligned}
\end{equation}
In \cite{bibenergyquant}, it is in fact $ \Vert H\Vert_{L^2}^2$ which is quantized, but as remarked in lemma 3.1 of \cite{bibnm3}, one has also quantization for the full second fundamental form and in particular for its traceless part as well.\\

In our case, there is at least one concentration point inside $\D_\frac{3}{4}$, else we would simply conclude by covering $\D_{\frac{1}{2}}$ with a finite number of disks of radius bounded from below and satisfying the hypothesis of theorem \ref{mainth1}. We then see that the energies of the bubble and of the limit are controlled, namely
\begin{equation} \label{ri2} \begin{aligned}
\left\| \Ar_\infty e^{-\lambda_\infty} \right\|_{L^2 (\D)} &\le \varepsilon_0 \\
\forall\: i \quad  \left\| \Ar_{\omega^i} e^{-\lambda_{\omega^i}} \right\|_{L^2 \left( \s^2 \right) } &\le \varepsilon_0.
\end{aligned} \end{equation}
Owing to \eqref{ri2}, we see that each bubble is a round sphere. Indeed Theorem H in \cite{michelatclassifi} (see also lemma \ref{mini} in the Appendix) guarantees that the bubbles, even if branched, are conformal inversions of minimal surfaces. But a classical result states that the total curvature of a minimal surface is a multiple of $4\pi$. Hence, assuming that $\varepsilon_0$ is small enough ensures that the bubbles are round spheres.

From the proof of theorem 0.2 of \cite{MR3843372}, it is known that a round sphere cannot be glued onto a compact surface without a third surface appearing in between, and this surface is necessarily non-umbilic\footnote{The argument is as follows: between the round sphere and the compact piece, there is a small geodesic circle. Blowing up the surface around this geodesic gives rise to a non compact Willmore surface with at least two ends which cannot be umbilic. Hence all the involved concentration points develop only one simple bubble which is a round sphere and $\bp_\infty$ must be constant.}. Hence all the round bubbles are simple,  and the concentration points must be isolated \footnote{In fact there is only one concentration point since the argument of theorem 0.2 of \cite{MR3843372} applies between two bubbles.}.\\

In Willmore bubbling, singular points (branched or non-compact) can only appear at concentration points. Since all round bubbles are simple, they may have at most one singular branched point. However, there exists no conformal parametrization of the Euclidean sphere with one single branch point.  Accordingly, none of the bubbles $\omega^i$ may have branch points, and thus are all immersions. \\

Next, let $x_k \in \D_{\frac{1}{2} }$ be such that:
\begin{equation}
\label{inx0}
\left| \Ar_k e^{-\lambda_k} \right|(x_k) \ge C(k) \left\| \Ar e^{-\lambda_k} \right\|_{L^2 \left( \D \right)}.
\end{equation}
There exists $x_0 \in \D_{\frac{3}{4} } $  such that $x_k \rightarrow x_0$. Necessarily, $x_0$ is a concentration point (one of the aforementioned points $a^i$). We choose $\rho>0$ such that $B(x_0,\rho)$ does not contain any other concentration point (since those are isolated), and moreover
$$ \xi_k^i \circ \bp_k( \rho_k^i x  +a_k^i) \rightarrow \omega \quad C^\infty_{\mathrm{loc}} (\R^2 ),$$
where $\omega$ parametrizes a round sphere. Consider $p:=\omega(\infty)$ and 
 $\iota_p$ the inversion of $\R^3$ centered on $p$. Put $\bpsi_k := \iota_p \circ \xi_k^i \circ  \bp_k $. If the energy $\Vert \nabla \vec{n}_{k}\Vert_2$ were to concentrate, we would be able to blow-up a round sphere, but since a plane ($\iota_p \circ \omega$) develops at scale $\rho^i_k$, using again the argument of the proof of theorem 0.2 of Laurain and Rivière \cite{MR3843372} and recalled above, it is then possible to generate a non-umbilic bubble between the sphere and the plane (in the same manner as one proves the simplicity of the bubbles), which yields a contradiction.\\ 

We may now apply theorem \ref{mainth1} to $\bp_k$ on a finite cover of $B(x_0, \rho)$, thereby obtaining by conformal invariance the estimate
\begin{equation*}
    \left\Vert \Ar_k e^{-\lambda_k} \right\Vert_{L^\infty(B\left(x_0,\frac{\rho}{2}\right))} \leq C \left\| \Ar e^{-\lambda_k} \right\|_{L^2 \left( \D \right)}.
\end{equation*}
This contradicts \eqref{HN1} and concludes the proof of Theorem \ref{mainth2}.
\end{proof}

\begin{remark}
In the proof of theorem \ref{mainth2}, we do not exclude Euclidean spheres as bubbles. We merely show they appear through a more regular concentration phenomenon, one not affecting the tracefree curvature.  A parallel reasoning should be drawn with \cite{bibnmheps}, where the $\varepsilon$-regularity for $H$  yields an improved regularity for minimal bubbling, because of the high impact a control on the mean curvature has on the regularity of the immersion. In the present case, while control on the tracefree curvature does not immediately yield control on the immersions, it sufficiently restricts the appearance of bubbles. 
\end{remark}

\subsection{An Umbilical Willmore Bubble}
In order to clarify the proof of theorem \ref{mainth2}, it is instructive to consider a concrete example of an umbilic Willmore bubble. In the first section of \cite{bibnm3}, an example of Willmore bubbling with a minimal bubble is given: there exists a sequence of Willmore immersions $\bp_\mu$  of the sphere converging smoothly to an inverted L\'opez surface, away from a single concentration point. We remind the reader that a L\'opez surface is a minimal sphere with one branched end of multiplicity $3$ and one immersed end.  The inverted L\'opez surface is thus a Willmore branched surface (see figure \ref{figurelopez}), with a point of density $4$, decomposed into a branch point of multiplicity $3$ and a regular point. \\

\begin{figure}
\includegraphics{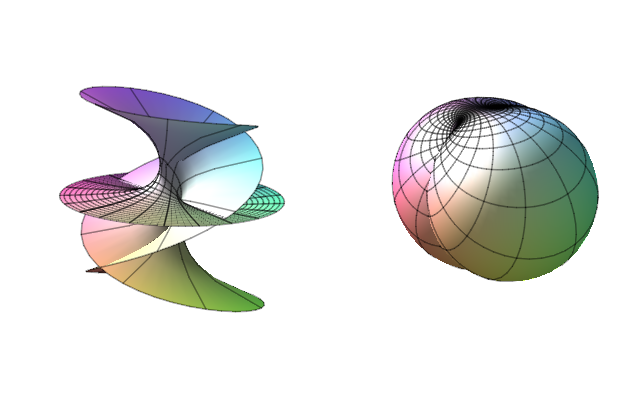}
\caption{A L\'opez surface, and one of its inversions}
\label{figurelopez}
\end{figure}

The appearance of a branch point is symptomatic of bubbling phenomena, and a blow-up analysis shows that the sequence $\frac{\bp_\mu (\mu^3.)}{\mu^9}$ converges smoothly to an Enneper surface (a minimal sphere with one branched end of multiplicity $3$, see figure \ref{figureenneper}).\\

\begin{figure}
\includegraphics{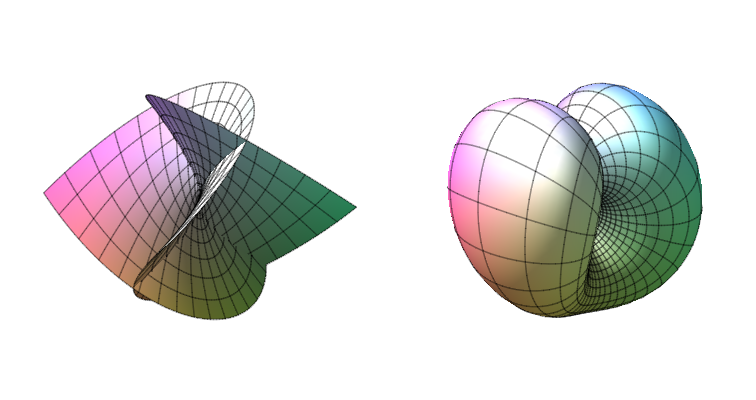}
\caption{An Enneper surface, and one of its inversions}
\label{figureenneper}
\end{figure}

Given the complexity of situations involving multiple points (branch ends and branch points) we adopt a schematic representation to illustrate the bubbling configurations. The L\'opez surface (and its inverse) will be represented according to figure \ref{schemalopez} while the Enneper (and its inverse) will be represented by figure \ref{schemaenneper}. The bubbling configuration of $\bp_\mu$ is schematically depicted on figure \ref{schemaPhimu}.\\
\begin{figure}
\begin{center}
\begin{tikzpicture}[scale=1]
\begin{scope}[xshift=0cm, yshift=0cm]
  \draw (0,0) arc(100:440:1);
\draw[red] (0.05,-0.1)--(0.35,0.1);
\draw[red] (0.05,0.1)--(0.35,-0.1);
\draw[red] (0.2,-0.15)--(0.2,0.15);
\draw[blue] (0.05,0)--(0.325,0);
\end{scope}
\begin{scope}[scale = 0.5, xshift=-6cm, yshift=-2cm]
  \draw (-1,-1)--(-1,1);
\draw (1,-1)--(1,1);
\draw[red]  (1,1) arc(180:90:1);
\draw[red] (-1,1) arc(0:90:1);
\draw[red]  (1,1) arc(180:90:0.5);
\draw[red] (-1,1) arc(0:90:0.5);
\draw[red]  (1,1) arc(180:90:0.75);
\draw[red] (-1,1) arc(0:90:0.75);
\draw[blue] (-1,-1) arc(0:-90:1);
\draw[blue] (1,-1) arc(180:270:1);
\end{scope}
\end{tikzpicture}
\end{center}
\caption{Schematic representation of the L\'opez surface and its inverse}
\label{schemalopez}
\end{figure}
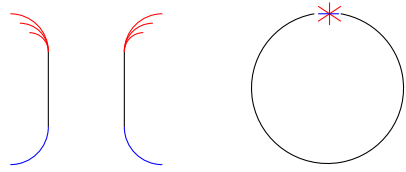

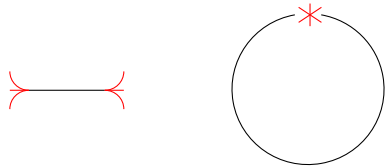
\begin{figure}
\begin{center}
\begin{tikzpicture}[scale=1]
\begin{scope}[xshift=0cm, yshift=0cm]
  \draw (0,0) arc(100:440:1);
\draw[red] (0.05,-0.1)--(0.35,0.1);
\draw[red] (0.05,0.1)--(0.35,-0.1);
\draw[red] (0.2,-0.15)--(0.2,0.15);
\end{scope}
\begin{scope}[scale = 0.5, xshift=-6cm, yshift=-2cm]
  \draw (-1,0)--(1,0);
\draw[red]  (-1,0) arc(-90:-180:0.5);
\draw[red]  (-1,0) arc(90:180:0.5);
\draw[red]  (-1,0)--(-1.5,0);
\draw[red]  (1,0) arc(-90:0:0.5);
\draw[red]  (1,0) arc(90:0:0.5);
\draw[red]  (1,0)--(1.5,0);
\end{scope}
\end{tikzpicture}
\end{center}
\caption{Schematic representation  of the Enneper surface and its inverse}
\label{schemaenneper}
\end{figure}

\begin{figure}
\begin{center}
\begin{tikzpicture}[scale=1]
\begin{scope}[xshift=0cm, yshift=0cm]
  \draw (0,0) arc(100:440:1);
\draw[red] (0.05,-0.1)--(0.35,0.1);
\draw[red] (0.05,0.1)--(0.35,-0.1);
\draw[red] (0.2,-0.15)--(0.2,0.15);
\draw[blue] (0.05,0)--(0.325,0);
\end{scope}
\begin{scope}[scale = 0.15, xshift=1.25cm, yshift=2cm]
 \draw (-1,0)--(1,0);
\draw[red]  (-1,0) arc(-90:-180:0.5);
\draw[red]  (-1,0) arc(90:180:0.5);
\draw[red]  (-1,0)--(-1.5,0);
\draw[red]  (1,0) arc(-90:0:0.5);
\draw[red]  (1,0) arc(90:0:0.5);
\draw[red]  (1,0)--(1.5,0);
\end{scope}
\end{tikzpicture}
\end{center}
\caption{Minimal bubbling: an Enneper bubble desingularizes the branch point of an inverted L\'opez surface}
\label{schemaPhimu}
\end{figure}
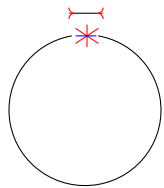

In what follows, we exploit the topological symmetry of this bubbling configuration: \emph{both the limit surface and the bubble are topological spheres}. We can then reverse the situation and consider the compactified bubble as the limit surface. Let us then consider $p \in \R^3$ and $\eta>0$ such that $d \left(p, \frac{\bp_\mu(\mu^3.)}{\mu^9} \right)\ge \eta >0$, and define:$$\bpsi_\mu (z):= \iota_p \circ \left( \frac{\bp_\mu(\frac{\mu^3}{z})}{\mu^9} \right).$$
Necessarily, $\bp_\mu$ converges to an inverted Enneper surface whose branch point at $0$ must be desingularized by a non-compact bubble. Let
$$\tilde \bpsi_\mu := \frac{\bpsi_\mu (\mu^3\,\cdot)}{\mu^9} = \frac{ \iota_p \circ \left( \frac{\bp_\mu(\frac{\mu^3}{\mu^3 z})}{\mu^9} \right) }{\mu^9} = \iota_{\mu^9p} \circ \bp_\mu \left( \frac{1}{z} \right).$$
Given the asymptotic behavior of $\bp_\mu$, the map $\tilde \bpsi_\mu$ may only converge to a L\'opez surface (whose branched end is this time at $\infty$ while its simple end is at $0$). Since a L\'opez minimal surface has two ends, i.e. two singular points, $\tilde \bpsi_\mu$ still has a concentration point desingularizing the simple end. However, given the conformal invariance of $\big| \Ar e^{-\lambda} \big|$, all the tracefree total curvature is accounted for within the inverted Enneper and the L\'opez surfaces. The only remaining bubble must then be totally umbilic, that is, a Euclidean sphere. This bubbling configuration is schematically represented on figure \ref{bulleombilique}.

\begin{figure}
\begin{center}
\begin{tikzpicture}[scale=1]
\begin{scope}[xshift=0cm, yshift=0cm]
  \draw (0,0) arc(100:440:1);
\draw[red] (0.05,-0.1)--(0.35,0.1);
\draw[red] (0.05,0.1)--(0.35,-0.1);
\draw[red] (0.2,-0.15)--(0.2,0.15);
\end{scope}
\begin{scope}[scale = 0.15, xshift=1.25cm, yshift=2.5cm,rotate =180]
  \draw (-1,-1)--(-1,1);
\draw (1,-1)--(1,1);
\draw[red]  (1,1) arc(180:90:1);
\draw[red] (-1,1) arc(0:90:1);
\draw[red]  (1,1) arc(180:90:0.5);
\draw[red] (-1,1) arc(0:90:0.5);
\draw[red]  (1,1) arc(180:90:0.75);
\draw[red] (-1,1) arc(0:90:0.75);
\draw[blue] (-1,-1) arc(0:-90:1);
\draw[blue] (1,-1) arc(180:270:1);
\end{scope}
\begin{scope}[scale=0.5,xshift=0.5cm, yshift=1.5cm, rotate =180]
\draw[blue] (0,0) arc(100:440:1);
\end{scope}
\end{tikzpicture}
\end{center}
\caption{Non-simple bubbling: on an inverted Enneper, one glues first a L\'opez surface, and then a euclidean sphere}
\label{bulleombilique}
\end{figure}
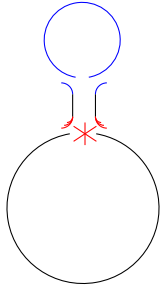

Inspecting $  \frac{\left.\tilde \bpsi_\mu\right|_{\D}}{\mathrm{diam} ( \tilde \bpsi_\mu (\D))}$, we observe the situation described in theorem \ref{mainth2}: a Willmore disk becoming a non compact surface up to a proper rescaling, as a Euclidean bubble concentrates at the origin. However, this bubble arises after an inversion at a regular point of another bubbling configuration (the blue regular sheet of figure \ref{schemaPhimu} becomes the blue end and the blue bubble of \ref{bulleombilique}). This configuration is thus regular and does satisfy \eqref{Mest3}.  One key consequence of the proof of theorem \ref{mainth2} is then that all umbilic bubbles are ``artificial" ones arising from the inversion of a regular point, and are thus smoother than expected.

\section{Applications}
\subsection{Proof of Theorem \ref{theoconfGauss}: Lorentzian \texorpdfstring{$\varepsilon$}{TEXT}-regularity}
\begin{proof}
The statement is merely a reformulation of \ref{mainth2} with $\langle \nabla Y, \nabla Y \rangle_{4,1} = \big| \Ar e^{-\lambda} \big|^2$ (see proposition \ref{230520200838}).
\end{proof}

\subsection{Proof of Theorem \ref{gap}: Intrinsic Gap}
\begin{proof}
We first show that $\Sigma$ is topologically either a plane or a sphere. By a classical result of Huber \cite{Huber}, see also \cite{White}, we know that $\Sigma$ is conformally equivalent to a compact Riemann surface $\hat{\Sigma}$ with possibly a finite set of points $\{p_1,\dots,p_N\}$ removed. So there exists a conformal parametrization $\bp : \hat{\Sigma}\setminus \{p_i\}_i \rightarrow \Sigma$. Since $\Sigma$ is complete, each $p_i$ corresponds to some end of $\Sigma$. Hence, thanks to the generalized Gauss-Bonnet formula (see theorem 10 of \cite{Ng}), we have   
$$\int_\Sigma K \, d\sigma = 4\pi (1-g(\hat{\Sigma})) -2 \pi \sum_{i=1}^n (m_i+1),$$  
where $g(\hat{\Sigma})$ is the genus of $\hat{\Sigma}$ and $m_i$ is the multiplicity of the end $p_i$. Moreover we have on $\Sigma$ the identity
$$\vert A\vert_g^2 =2 \vert\mathring{A}\vert_g^2 + 2K,$$
so that 
\begin{equation}
    \label{GB}
    \frac{1}{2} \int_{\Sigma} \vert A \vert_g^2 \, d\text{vol}_g \leq \int_\Sigma \vert \mathring{A} \vert_g^2 \, d\text{vol}_g +  4\pi (1-g(\hat{\Sigma})) -4\pi N .
\end{equation}
Then for $\varepsilon_0$ small enough, $\hat{\Sigma}$ is a topological sphere with at most one end or a torus with no end. But the latter case is excluded since the right-hand side of \eqref{GB} is bigger than $4\pi$ owing to a classical estimate by T. Willmore (theorem 7.2.2 \cite{bibwill}).\\

In conclusion, there exists a conformal Willmore immersion $\bp : \R^2 \rightarrow \R^2$ whose image is $\Sigma$ (up to the removal of a point in the compact case). Applying corollary \ref{coro} around any point $p\in \R^2$ and letting $\rho \rightarrow +\infty$ yields $\vert \mathring{A}\vert(p) =0$, which implies the announced statement. 
\end{proof}

\section*{Appendix}

\subsection*{Variational bubbles are conformally minimal}
In theorem H of \cite{michelatclassifi}, the authors obtain as a byproduct that any branched sphere that appears as a bubble must be conformally minimal. The proof of theorem H in \cite{michelatclassifi} is quite involved due to the general assumptions used by the authors. For the sake of completeness of the present paper, we give an elementary argument to obtain the same result.
\begin{lem}
    \label{mini}
    Let $\bp_k :\D \rightarrow \R^3$ be a sequence of conformal Willmore immersions, $a_k\in \D$ a sequence of points converging to some $a_\infty$, $ \mu_k$ converging to $0$, and $T_k$ a sequence of conformal transformations of $\R^3$ . Suppose
    $$\tilde{\bp}_k:=T_k\circ \bp_k(a_k+\mu_k \, \cdot ) \rightarrow \omega \hbox{ on } \R^2\setminus{S}$$
    where $S$ is the finite set and $\omega $ a branched Willmore sphere. Then $\omega$ is conformally minimal.   
\end{lem}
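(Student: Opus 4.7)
My proposed strategy is to show that $\omega$ has vanishing \emph{Willmore residues} at each of its singular points in $S$ (and at infinity), and to deduce conformal minimality from the rigidity of branched Willmore spheres with vanishing residues. Recall that the Willmore equation (\ref{equationwillmorefaible}) takes the divergence form $\mathrm{div}(\nabla\vec{H}-3\pi_{\n}(\nabla\vec{H})+\nabla^{\perp}\n\times\vec{H})=\vec{0}$; to each isolated singular point $s$ of a weak Willmore immersion one associates a residue obtained by integrating this current against the outward normal along a small loop surrounding $s$. The residue is independent of the loop by the conservation law, and it automatically vanishes whenever the immersion is smooth through $s$.

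To show that the residues of $\omega$ vanish, fix $s\in S$ and a disk $B_{\delta}(s)$ disjoint from $S\setminus\{s\}$. By the locally smooth convergence $\tilde{\bp}_k\to\omega$ on $\R^{2}\setminus S$, the residue of $\omega$ at $s$ is the limit as $k\to\infty$ of the analogous loop integral for $\tilde{\bp}_k$ along $\partial B_\delta(s)$. Now $\tilde{\bp}_k=T_k\circ\bp_k(a_k+\mu_k\,\cdot)$ is a smooth Willmore immersion on $B_\delta(s)$ except possibly at the finitely many preimages of the pole of $T_k$ (if $T_k$ is an inversion); since $\tilde{\bp}_k$ converges on $\R^2\setminus S$, these preimages must cluster at points of $S$. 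Applying Stokes on the smooth portion and using the conformal invariance of the Willmore equation together with Proposition \ref{modificationYtransformationconforme} to rewrite the residue at the pole-preimage as a residue of the smooth $\bp_k$ at a smooth point (which is zero), one concludes that the loop integrals vanish for all large $k$, hence $\vec{r}_\omega(s)=0$. The residue at infinity then vanishes by Stokes on the compactified sphere $\s^2$, since the sum of all residues is zero.

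With all residues vanishing, the branched Willmore sphere $\omega$ is conformally minimal. One can invoke the classification of \cite{michelatclassifi} directly, or give a self-contained argument via the conformal Gauss map: by Proposition \ref{230520200838}, $Y_\omega$ is a branched minimal immersion into $\s^{4,1}$, and combining (\ref{230520200727}) with the identity $H=Y_5-Y_4$ of (\ref{120320201245}), the minimality of $\omega$ itself is equivalent to $\langle Y_\omega,(0,0,0,1,1)\rangle_{4,1}\equiv 0$, which is a null constraint in $\R^{4,1}$. By the $SO(4,1)$-covariance of Proposition \ref{modificationYtransformationconforme}, conformal minimality of $\omega$ is equivalent to the existence of \emph{some} null $v\in\R^{4,1}$ annihilating $Y_\omega$, and vanishing residues translate to precisely this condition.

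The main obstacle is the careful bookkeeping of how the Willmore conservation current transforms under the Möbius $T_k$: one must show that the residue of the inverted smooth immersion at the pole-preimage vanishes, which requires a direct calculation (or conformal rewriting) and crucially uses the hypothesis $\mu_k\to 0$ to localise the contributions. The remaining subtlety, namely deriving the global null constraint $\langle Y_\omega, v\rangle_{4,1}\equiv 0$ from the pointwise vanishing of residues, relies on the harmonicity and real-analyticity of $Y_\omega$ on the connected surface $\s^2\setminus S$ together with an appropriate unique-continuation argument.
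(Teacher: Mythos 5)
Your proposal diverges completely from the paper's argument, and it contains a genuine gap at its central step. The paper never works with residues: it observes that the Bryant quartic differential $Q_{\tilde{\bp}_k}$ is holomorphic on disks around each point of $S$, uniformly bounded there by the maximum principle and the smooth convergence on $\R^2\setminus S$, so that $Q_\omega$ extends holomorphically across $S$; it then bounds the pole order at infinity via the asymptotics of branched ends (theorem 3.1 of \cite{LN}) and concludes $Q_\omega\equiv 0$ by Liouville, whence conformal minimality by Bryant's theorem E. Your route instead tries to prove that the Willmore residues of $\omega$ vanish and then to deduce conformal minimality from that.

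The first half of your plan (vanishing of the first residues by passing the flux of the conservation law to the limit over loops that are contractible in the smooth domain of $\tilde{\bp}_k$) is plausible, modulo the pole-of-$T_k$ bookkeeping you yourself flag. The fatal problem is the second half. Vanishing of the first residue at a point of $S$ only says that the conservation law $\mathrm{div}(\nabla\vec{H}-3\pi_{\n}(\nabla\vec{H})+\nabla^{\perp}\n\times\vec{H})=\vec{0}$ holds distributionally across that point; it does not yield the global null constraint $\langle Y_\omega,v\rangle_{4,1}\equiv 0$ for some null $v$, and your assertion that ``vanishing residues translate to precisely this condition'' is unsubstantiated --- it is, in essence, the entire content of the theorem you are trying to prove. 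Classifying \emph{true} branched Willmore spheres (first residues zero) as conformally minimal is precisely the hard analytic work of \cite{michelatclassifi} (it involves second residues and delicate local expansions at branch points), so invoking it ``directly'' defeats the stated purpose of the lemma, which is to give an elementary alternative; and your proposed ``self-contained'' substitute via unique continuation is only a declaration of intent, not an argument. As written, the proof does not close.
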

\begin{proof}
Following Bryant's work, in order to prove that a sphere is conformally minimal, it suffices to prove that its associated quartic form $Q_\omega$ vanishes (see theorem E in \cite{bibdualitytheorem}).
    Let $Q_{\tilde{\bp}_k}$ be the quartic form associated with $\bp_k$. Per theorem B in \cite{bibdualitytheorem}, $Q_{\tilde{\bp}_k}$ is holomorphic. Let $p\in R^2$ be a branch point. Our strong convergence hypothesis away from branch points guarantees that the quartic form $Q_\omega$ is holomorphic around $p$, because $Q_{\tilde{\bp}_k}$ is bounded.\, owing to the maximum principle. Hence $Q_\omega$ may have at most one pole at infinity. Letting $\tilde{Q}= Q_{\omega}\left( \frac{1}{z} \right)$, by theorem 3.1 of \cite{LN}, the order of the pole of $\tilde{Q}$ at $0$ is at most $2$. One easily checks that  $\tilde{Q}(z)=O\left(\frac{1}{\vert z\vert^8}\right)$, so that $Q_\omega\equiv 0$ by Liouville's theorem, thereby concluding the proof.
\end{proof}
\subsection*{Convenient Reformulation of the Gauss-Codazzi equation}
Recall that $\nabla \n = -e^{-2\lambda} A \nabla \Phi$, that is 
$$ \begin{aligned}
-\n_x &= \left( H + \left( \frac{l-n}{2} \right)e^{-2\lambda} \right) \Phi_x + m e^{-2\lambda} \Phi_y \\
\text{and}\\
-\n_y &= m e^{-2\lambda} \Phi_x + \left( H - \left( \frac{l-n}{2} \right)e^{-2\lambda} \right) \Phi_y.
\end{aligned}$$

Differentiating yields

$$\begin{aligned}
- \n_{xy} &= \left( H_y+ \left( \frac{l-n}{2} \right)_y e^{-2\lambda} + \lambda_y   H - \lambda_y \left( \frac{l-n}{2} \right)e^{-2\lambda}  - \lambda_x m e^{-2\lambda} \right) \Phi_x  \\& + \left( m_y e^{-2\lambda}  - \lambda_y m e^{-2 \lambda} + \lambda_x  \left( H + \left( \frac{l-n}{2} \right) \right) \right) \Phi_y + \left(\dots \right) \n \\
\text{and}\\
-\n_{yx} &= \left( m_x  e^{-2\lambda} - \lambda_x m e^{-2\lambda}  + \lambda_y \left( H - \left( \frac{l-n}{2} \right)e^{-2\lambda} \right) \right) \Phi_x  \\&+ \left( H_x - \left( \frac{l-n}{2} \right)_x e^{-2\lambda} + \lambda_x H  + \lambda_x \left( \frac{l-n}{2} \right)  - \lambda_y m e^{-2\lambda}  \right) +  \left( \dots \right) \n.
\end{aligned}$$

Identifying $\n_{xy}$ and $\n_{yx}$, one finds:
\begin{equation}
\label{Gausscodazziformereelle}
\begin{aligned}
e^{2\lambda} H_x &=  \left( \frac{l-n}{2} \right)_x + m_y \\
\text{and}\\
e^{2\lambda} H_y &= -  \left( \frac{l-n}{2} \right)_y + m_x.
\end{aligned}
\end{equation}

\subsection*{Brief Proof of Theorem \ref{theoZhu}}
In order to contrast \ref{theoconfGauss} from \ref{theoZhu}, we sketch a proof of theorem  \ref{theoZhu}.
\begin{proof}
A weakly harmonic application $u$ satisfies: $$\Delta u + \langle \nabla u , \nabla u \rangle_{4,1} u  =0.$$ Equivalently, the latter may be recast in the conservative form
$$\mathrm{div}( \nabla u u^T - u \nabla u^T ) = 0.$$ Accordingly, on $\D$,  there exists a matrix $B$ such that
$\nabla^\perp B = \nabla u u^T - u \nabla u^T$. As a consequence, $B$ satisfies $\nabla B = u \nabla^\perp u^T - \nabla^\perp u u^T$ which implies
\begin{equation}
\label{eqmatB}
\Delta B = 2 \nabla u \nabla^\perp u^T.
\end{equation}
On the other hand, if we denote by $\epsilon$ the signature matrix of $\R^{4,1}$, then for any two vectors $a,b$, we have $\langle a, b \rangle_{4,1}= a^T \epsilon b$. 
Then
$$\nabla^\perp B \epsilon \nabla u = \nabla u u^T \epsilon \nabla u  - u \nabla u^T \epsilon \nabla u = \langle u, \nabla u \rangle_{4,1} \nabla u - \langle \nabla u , \nabla u \rangle_{4,1} u = - \langle \nabla u , \nabla u \rangle_{4,1} u  = \Delta u,$$
since, given that $u\in \s^{4,1}$,  we have $\langle u, \nabla u \rangle_{4,1}=0$. Combining this to \eqref{eqmatB} yields the system:
\begin{equation*}
\begin{aligned}
\Delta u &= \nabla^\perp B \nabla (\epsilon  u ) \\
\Delta B& =  2 \nabla u \nabla^\perp u^T.
\end{aligned}
\end{equation*}
Provided that $\left\| \nabla  u \right\|_{L^2 } \le \varepsilon_0$, the statement \eqref{epsregZhu} now follows from classical integration by compensation techniques, such as those presented in \cite{bibharmmaps}.
\end{proof}
\addcontentsline{toc}{section}{Bibliography}
\bibliographystyle{plain}
\bibliography{bibliography}

\end{document}